\documentclass[12pt]{article}
\usepackage[top=1in, bottom=1in, left=1in, right=1in]{geometry}
\usepackage{amsmath}
\usepackage{enumerate}
\usepackage{float}
\usepackage{fancyvrb}
\usepackage{amssymb}
\usepackage{color}
\usepackage{graphicx}
\usepackage{mathrsfs}
\usepackage[ruled,linesnumbered]{algorithm2e}

\usepackage{mathabx}
\usepackage{amsthm}
\usepackage[dvipsnames]{xcolor}
\usepackage{wasysym}
\usepackage{tikz}
\usepackage{subfig}
\usepackage[font=small]{caption}
\usetikzlibrary{cd}
\usetikzlibrary{decorations.markings}
\usepackage{titlesec}

\titleformat*{\section}{\large\bfseries}
\newcommand{\ssection}[1]{%
  \section[#1]{\centering\normalfont\scshape #1}}

\newcommand{\funfont}[1]{{\fontfamily{cmtt}\selectfont #1}}

\theoremstyle{definition}
\newtheorem{prop}{Proposition}
\newtheorem{thm}{Theorem}
\newtheorem{lemma}{Lemma}
\newtheorem{defn}{Definition}

\newtheorem*{rmk}{Remark}

\def\S{\mathfrak{S}}

\def\Z{\mathbb{Z}}

\def\mon{\operatorname{mon}}

\def\BPD{\operatorname{BPD}}

\newcommand{\quo}[1]{\text{``}{#1}\text{''}}

\title{Bijective Proofs of Monk's rule 
for Schubert and Double Schubert Polynomials
with Bumpless Pipe Dreams }
\author{Daoji Huang}
\date{}

\SetArgSty{textnormal}
\begin{document}

\maketitle
\begin{abstract}
    We give bijective proofs of Monk's rule for
    Schubert and double Schubert polynomials computed
    with bumpless pipe dreams. In particular, 
    they specialize to bijective proofs of
    transition and cotransition
    formulas of Schubert and double Schubert 
    polynomials, which can be used to establish
    bijections with ordinary pipe dreams.
    
\end{abstract}
\ssection{Introduction}
Bumpless pipe dreams are introduced
in the context of back stable Schubert calculus by
Lam, Lee, and Shimozono \cite{lam2018back}. In that paper, the authors
introduced bumpless pipe dream polynomials and proved
that they agree with double Schubert polynomials. 
Subsequently, Weigandt  \cite{weigandt2020bumpless} expressed
Lascoux's transition formula with bumpless pipe
dream polynomials and gave a bijective proof
with bumpless pipe dreams. In a recent paper, Knutson \cite{knutson2019schubert} gave several proofs of
the cotransition formula of double Schubert polynomials,
including a combinatorial proof with ordinary pipe
dreams. Both transition and cotransition formulas
are specializations of 
(an equivalent formulation of)
Monk's rule for double
Schubert polynomials, which is an expansion
formula of the product of a linear double 
Schubert polynomial and a double Schubert polynomial.
The original Monk's rule is a
geometric version for single
Schubert polynomials, studied first in \cite{monk1959geometry}. A combinatorial proof
of it
with ordinary pipe dreams (called RC-graphs there) is given in 
\cite{bergeron1993rc}. In this paper,
we give a new bijective proof
of Monk's rule
for single Schubert polynomials
with bumpless pipe dreams, and show that
a slight modification of the
construction gives us 
a bijective proof of Monk's rule for
double Schubert polynomials using
decorated bumpless pipe dreams, which 
are bumpless pipe dreams with a binary
label on each blank tile. Combinatorial
proofs of Monk's rule for double Schubert
polynomials were not known before.
We also remark that with the
cotransition bijections
on bumpless pipe dreams, together
with similar known results on
ordinary pipe dreams, one can establish
bijections between ordinary pipe dreams
and bumpless pipe dreams.
\begin{defn}
A (reduced) \textbf{bumpless pipe dream} is a tiling of the $n\times n$ grid
with the six kinds of tiles shown below

\begin{figure}[h]
\centering
\includegraphics[scale=0.5]{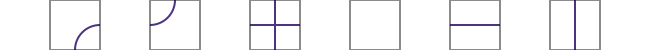}
\end{figure}

such that
\begin{enumerate}[(a)]
    \item there are $n$ pipes total,
    \item travelling from south to east, 
    each pipe starts vertically at the south edge of the grid, and
    ends horizontally at the east edge of the grid, and
    \item no two pipes cross twice.
\end{enumerate}
\end{defn}
Condition (c) is the reducedness condition. In
this paper we only consider reduced bumpless pipe dreams.
For convenience, we call these tiles \textbf{
r-tile}, \textbf{j-tile}, \textbf{``$+$''-tile}, \textbf{blank tile,}
\textbf{$\quo{-}$-tile}, and \textbf{$\quo{|}$-tile}.
The term ``bumpless'' comes from the fact that the tiling
disallows the ``\textbf{bump tile}'' shown below.

\begin{figure}[h]
\centering
\includegraphics[scale=0.5]{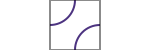}
\end{figure}

We index the tiles in a bumpless
pipe dream with \emph{matrix coordinates}.
Given a bumpless pipe dream, one can read off a permutation by
labeling the pipes from 1 to $n$ along the south edge, follow the pipes
from south to east, and read the labels top-down along the east edge. 
Given a permutation $\pi\in S_n$, we denote the set of bumpless pipe dreams
associated to $\pi$ by $\BPD(\pi)$.

\begin{figure}[h]
\centering
\includegraphics[trim=45 1250 500 45,clip, scale=0.09]{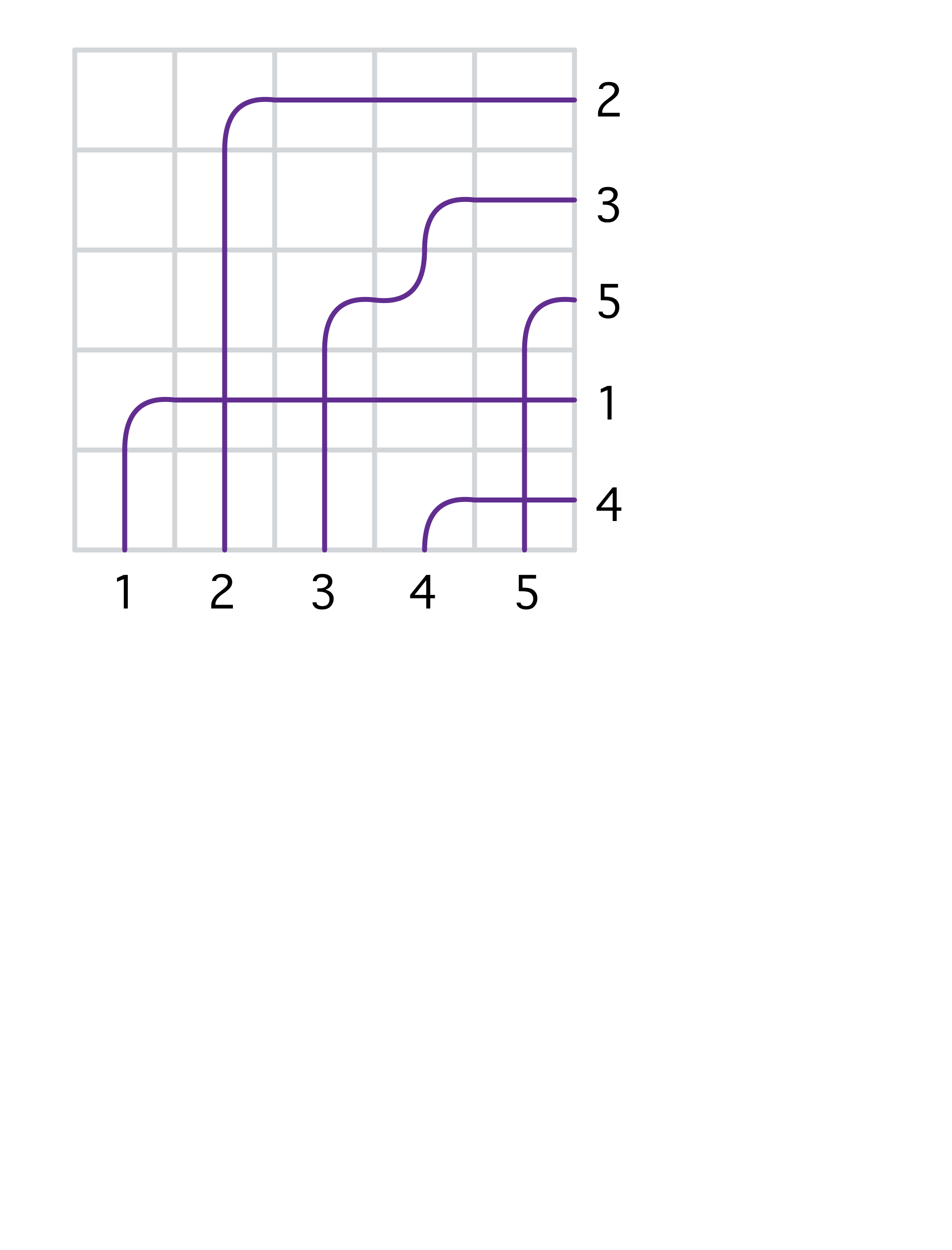}
\caption{A bumpless pipe dream for $\pi=23514$}
\label{fig:23514}
\end{figure}

For example, in Figure \ref{fig:23514}, the j-tile
at $(3,4)$ belongs to pipe
$\pi(2)=3$.

\begin{defn}
For $D\in \BPD(\pi)$, let $blank(D)\subseteq[n]\times [n]$ denote the 
set of blank tiles in the bumpless pipe dream $D$.
For $\pi\in S_n$, let 
\[\S_\pi (\mathbf{x,-y}):= \sum_{D\in \BPD(\pi)}\prod_{(i,j)\in blank(D)}(x_i-y_j)\in
\Z[x_1,\cdots,x_n,y_1,\cdots, y_n].\]
\end{defn}
Lam, Lee, and Shimozono showed that
that $\mathfrak{S}_\pi(\mathbf{x,-y})$ is the 
double Schubert polynomial for $\pi\in S_n$
\cite[Theorem 5.13]{lam2018back}.
Setting all the $y$ variables to 0, we get 
the expression for single Schubert polynomials
\[\S_\pi (\mathbf{x}):= \sum_{D\in \BPD(\pi)}\prod_{(i,j)\in blank(D)}x_i\in \Z[x_1,\cdots,x_n].\]
\ssection{A Bijective Proof of Monk's Rule with Bumpless Pipe Dreams}
\begin{thm}[Monk's rule] Let $\pi\in S_n$, $1\le \alpha <n$,
such that there exists some $l>\alpha$ such that
$\pi\,t_{\alpha,l}\gtrdot \pi$, where 
$\gtrdot$ denotes the covering relation in Bruhat order,
and $t_{a,b} $ denotes transposition of $a$ and $b$
in $S_n$.
 Then
\[\S_\alpha (\mathbf{x})\S_\pi(\mathbf{x}) = \sum_{\substack{k\le\alpha<l \\ \pi\,t_{k,l}\gtrdot\pi}}\S_{\pi\,t_{k,l}}(\mathbf{x}).\]
\begin{rmk}Note that we can remove the conditions on $\alpha$ and
the existence of $l$ such that
$\pi\,t_{\alpha,l}\gtrdot \pi$ if we consider
$\pi\in S_\infty := \bigcup_{n}S_n$, and this is how
the rule is usually stated. 
For convenience of our combinatorial proofs
we choose
to work with the version stated
for $\pi\in S_n$, but this does
not lose the level of generality.
\end{rmk}
Subtracting $\S_{\alpha-1}(\mathbf{x})\S_\pi(\mathbf{x})$ from it and rearranging,
we get
\begin{equation}
\label{singlemonk}
x_\alpha\S_\pi(\mathbf{x}) + \sum_{\substack{k<\alpha\\\pi\,t_{k,\alpha}\gtrdot \pi}}\S_{\pi\,t_{k,\alpha}}(\mathbf{x}) =\sum_{\substack{\alpha<l\\\pi\,t_{\alpha,l}\gtrdot \pi}} \S_{\pi\,t_{\alpha,l}}(\mathbf{x})
\end{equation}
\end{thm}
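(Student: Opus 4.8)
The plan is to prove the equivalent recurrence (\ref{singlemonk}), which is all that is needed: since $\S_\alpha(\mathbf{x})=x_1+\cdots+x_\alpha$ we have $\S_\alpha-\S_{\alpha-1}=x_\alpha$, and the two displayed identities are equivalent after the indicated subtraction. Because every $D\in\BPD(w)$ has exactly $\ell(w)$ blank tiles (the degree of $\S_w$), all terms on both sides of (\ref{singlemonk}) are homogeneous of degree $\ell(\pi)+1$, so it suffices to produce a weight-preserving bijection
\[
\Phi\colon\ \bigsqcup_{\substack{\alpha<l\\ \pi\,t_{\alpha,l}\gtrdot\pi}}\BPD(\pi\,t_{\alpha,l})\ \longrightarrow\ \bigl(\BPD(\pi)\times\{x_\alpha\}\bigr)\ \sqcup\ \bigsqcup_{\substack{k<\alpha\\ \pi\,t_{k,\alpha}\gtrdot\pi}}\BPD(\pi\,t_{k,\alpha}),
\]
where a pair $(D,x_\alpha)$ is assigned the weight $x_\alpha\prod_{(i,j)\in blank(D)}x_i$ and every other object keeps its blank-tile weight. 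Here the extra factor $x_\alpha$ will correspond to a single blank tile in row $\alpha$ that is present in the source but absent in $D$.

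The main construction is a local move governed by row $\alpha$. Fix $F\in\BPD(\pi\,t_{\alpha,l})$ and write $\sigma=\pi\,t_{\alpha,l}$. Reading the pipes as in the definition, the two pipes whose relative order distinguishes $\sigma$ from $\pi$ are the pipe with bottom endpoint in column $\pi(l)$, which exits at row $\alpha$, and the pipe with bottom endpoint in column $\pi(\alpha)$, which exits at row $l$; since $\alpha<l$ and $\sigma(\alpha)=\pi(l)>\pi(\alpha)=\sigma(l)$, these form the unique inversion of $\sigma$ that is resolved on passing to $\pi$, so in the reduced diagram $F$ they cross exactly once. I locate this crossing and resolve it by a droop move in the style of Weigandt \cite{weigandt2020bumpless}, replacing the crossing by an elbow and pulling the pipe that exits at row $\alpha$ downward through a canonical rectangle that reducedness singles out. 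The resulting tiling is again a reduced bumpless pipe dream, reading a permutation obtained from $\sigma$ by a single uncrossing at row $\alpha$.

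The outcome is controlled by how far the uncrossing propagates before meeting a third pipe. In the generic case the uncrossing completes within row $\alpha$: the image is a diagram $D\in\BPD(\pi)$, and the blank that $F$ carried in row $\alpha$ in excess of $D$ accounts for the factor $x_\alpha$, placing the image in $\BPD(\pi)\times\{x_\alpha\}$. Otherwise the move is intercepted by a third pipe exiting at some row $k<\alpha$, and its net effect is a length-preserving slide whose image lies in $\BPD(\pi\,t_{k,\alpha})$ with $\pi\,t_{k,\alpha}\gtrdot\pi$. The combinatorial heart of the argument is to verify that this stopping rule produces exactly the covers $\pi\,t_{k,\alpha}$ with $k<\alpha$, and that every such diagram is reached exactly once, so that $\Phi$ matches the two sides of (\ref{singlemonk}) term by term.

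Invertibility comes from running the same rectangle move in reverse: from a marked diagram in $\BPD(\pi)\times\{x_\alpha\}$, or from a diagram in some $\BPD(\pi\,t_{k,\alpha})$, an undroop reintroduces a crossing at row $\alpha$ and recovers the exit row $l$, hence the target $\pi\,t_{\alpha,l}$, uniquely. I expect the main obstacle to be the local analysis showing that every instance of the move stays inside the class of reduced bumpless pipe dreams---that is, that no two pipes are made to cross twice and no bump tile is created---together with the proof that the two cases above are mutually exclusive and jointly exhaustive, so that $\Phi$ and its proposed inverse are genuinely two-sided. Once this is established, the degree bookkeeping and the computation of the output permutation are routine, and the general Monk's rule follows by summing (\ref{singlemonk}) over the values $1,\dots,\alpha$, the intermediate terms telescoping away.
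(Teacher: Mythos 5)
Your overall strategy is the paper's strategy run in reverse: reduce to the recurrence (\ref{singlemonk}), then biject the two sides by uncrossing the unique crossing of the two pipes that distinguish $\pi\,t_{\alpha,l}$ from $\pi$ and letting the disturbance propagate until it either dies in row $\alpha$ (contributing the $x_\alpha\S_\pi$ term) or produces a cover $\pi\,t_{k,\alpha}$ with $k<\alpha$. That matches the paper's Theorem \ref{thm:monkbpd}, whose forward map inserts a blank tile at the r-tile of pipe $\pi(\alpha)$ on row $\alpha$ and whose inverse starts exactly as you describe, by replacing the relevant ``$+$''-tile with a bump tile.

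However, the proposal has a genuine gap precisely where you flag ``the main obstacle'': the propagation step is not actually defined, and defining it is the entire content of the proof. When the uncrossed pipe $p$ is undrooped into the nearest available tile (the paper's Lemmas \ref{lem:droop} and \ref{lem:undroop} identify this tile), three things can happen, not two: the target is a blank tile (continue along $p$), or $p$ bumps into a pipe $q$ it does \emph{not} already cross (terminate, replacing the bump by a cross, yielding a cover), or $p$ bumps into a pipe $q$ it \emph{already} crosses. In the third case one cannot stop --- that would create a double crossing or change the permutation wrongly --- and the paper's resolution is the cross--bump swap of Lemmas \ref{lem:lowerr} and \ref{lem:upperj}: exchange the positions of the existing cross and the new bump, and recurse from the relocated bump. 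Your description of being ``intercepted by a third pipe'' collapses this into a single local event, but the actual map is an iterated algorithm that may visit many pipes and many tiles before terminating; without the swap rule the map is simply not defined on all inputs. Two further pieces are then needed and absent from the proposal: a termination argument (the paper uses that the area under the pipe $p$ strictly decreases with each droop or swap), and the verification that the terminal permutation is always a cover of the correct form (which falls out of tracking which pipe owns the r- versus j-shaped corner of each bump, the content of Lemmas \ref{lem:lowerr} and \ref{lem:upperj}). Until the propagation rule, termination, and the inverse algorithm are written down, the bijection is a plausible plan rather than a proof.
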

The goal of this section is to give a bijective proof
of formula (\ref{singlemonk}) with bumpless pipe dreams, as stated
in the following theorem.
\begin{thm}
\label{thm:monkbpd}
Given $\pi\in S_n$ and
$1\le \alpha <n$ such that there exists
$l>\alpha$ where $\pi\,t_{\alpha,l}\gtrdot \pi$, there 
exists a bijection
 \[\Phi_\pi:\BPD(\pi)\sqcup\coprod_{\substack{k<\alpha\\ \pi\,t_{k,\alpha}\gtrdot \pi}}\BPD(\pi\,t_{k,\alpha}) \longrightarrow
\coprod_{\substack{\alpha<l\\ \pi\,t_{l,\alpha}\gtrdot \pi}}\BPD(\pi\,t_{l,\alpha}).\]
such that for any $D\in \BPD(\pi)$, the number of blank
tiles on each row other than row $\alpha$ is preserved
under the map, the number of blank tiles on row
$\alpha$ increases by 1, and for any 
$D\in \bigcup_{\substack{k<\alpha\\ \pi\,t_{k,\alpha}\gtrdot \pi}}\BPD(\pi\,t_{k,\alpha})$ the number of blank tiles
on each row is preserved under the map.
\end{thm}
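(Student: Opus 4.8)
The plan is to construct $\Phi_\pi$ explicitly as a localized droop-type surgery on bumpless pipe dreams centered at row $\alpha$, and to prove bijectivity by exhibiting an inverse built from the corresponding ``undroop.'' The first observation that organizes everything is that $\S_\sigma(\mathbf{x})$ is homogeneous of degree $\ell(\sigma)$, so every $D\in\BPD(\sigma)$ has exactly $|blank(D)|=\ell(\sigma)$ blank tiles. On the $\BPD(\pi)$ summand the prescribed bookkeeping (one new blank in row $\alpha$, all other rows unchanged) is then precisely what forces the weight of $\Phi_\pi(D)$ to equal $x_\alpha\cdot\prod_{(i,j)\in blank(D)}x_i$, while on each $\BPD(\pi\,t_{k,\alpha})$ summand weight is preserved outright; thus these row-by-row blank statistics are exactly the mechanism that promotes a set bijection into a proof of (\ref{singlemonk}). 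Note also the degrees are consistent, since $\ell(\pi\,t_{k,\alpha})=\ell(\pi\,t_{\alpha,l})=\ell(\pi)+1$.

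For the forward map I would treat the two source summands separately. Given $D\in\BPD(\pi)$, I locate the turning corner (an r-tile) of the pipe exiting the east edge at row $\alpha$, namely pipe $\pi(\alpha)$, at the point in row $\alpha$ where it passes from vertical to horizontal travel, and droop this corner downward to the first row $l>\alpha$ at which it can cross pipe $\pi(l)$. This produces the single extra crossing realizing $\pi\,t_{\alpha,l}$ and leaves one new blank at the vacated corner in row $\alpha$. Here I must verify that the resulting $l$ satisfies $\pi\,t_{\alpha,l}\gtrdot\pi$ (the cover condition is what guarantees no intermediate pipe obstructs the droop), that the two affected pipes were not already crossing so reducedness is maintained, and that no blank is created or destroyed outside row $\alpha$. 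For $D\in\BPD(\pi\,t_{k,\alpha})$ with $k<\alpha$, I instead identify the crossing of the two pipes exchanged by $t_{k,\alpha}$ and transport it, by a weight-preserving shuffle of corners, into the crossing pattern of some $\BPD(\pi\,t_{\alpha,l})$; since both $\pi\,t_{k,\alpha}$ and $\pi\,t_{\alpha,l}$ cover $\pi$, this simply relocates the single added crossing from the $(k,\alpha)$ pair of pipes to an $(\alpha,l)$ pair, gaining and losing no blanks as required.

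To prove bijectivity I would define $\Phi_\pi^{-1}$ on $E\in\BPD(\pi\,t_{\alpha,l})$ by locating the distinguished crossing of pipes $\pi(\alpha)$ and $\pi(l)$ that is deleted when passing down to $\pi$, and undrooping it upward. The landing is governed by a dichotomy: if the corner can be pulled all the way up to row $\alpha$, deleting its blank and becoming an r-tile, then $E$ originates from the $\BPD(\pi)$ branch; if the ascending corner is instead blocked at some row $k<\alpha$ and must settle there as a crossing, then $E$ originates from the $\BPD(\pi\,t_{k,\alpha})$ branch. Showing that this stopping data is uniquely determined by $E$, and that the upward and downward surgeries reconstruct identical intermediate tiles, simultaneously yields injectivity, surjectivity, and the claimed decomposition of the codomain.

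The hard part will be the reducedness analysis underlying Steps of the forward map and its inverse: a naive single-tile droop can fail when another pipe lies inside the drooping rectangle, so in general the move must be realized as a controlled cascade of elementary droops, and I would have to check in every configuration that it (a) terminates at a genuine cover $\pi\,t_{\alpha,l}$, (b) never produces a doubly-crossing pair or a forbidden bump tile, and (c) is retraced exactly by the inverse. Pinning down the invariant that makes the stopping row in the inverse well-defined, and showing it cleanly separates the two source summands, is the delicate heart of the argument; once it is in place, the weight tracking described in the first paragraph is immediate.
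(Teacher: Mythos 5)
Your outline matches the paper's architecture—droop the r-corner of pipe $\pi(\alpha)$ in row $\alpha$, propagate, and invert by undrooping with a dichotomy on where the ascent stops—but the proposal stops short of the actual content of the proof, and the pieces you defer are not routine verifications: they are the construction itself. Two mechanisms are missing. First, when the drooped corner of $p$ lands on a \emph{blank} tile in some lower row $i'$, that row loses a blank; the only way your claim ``no blank is created or destroyed outside row $\alpha$'' can hold is that the algorithm then re-runs the insertion at the r-tile of $p$ on row $i'$, pushing the deficit further down until it is absorbed by a new crossing. Your phrase ``droop this corner downward to the first row $l>\alpha$ at which it can cross pipe $\pi(l)$'' conceals this iteration, and without it the row-by-row blank statistics you rely on for the weight argument are simply false. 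Second, when the drooped corner lands on an r-tile of a pipe $q$ that $p$ \emph{already crosses}, you cannot terminate by forming a cross (that would violate reducedness), and you cannot refuse the droop either; the paper's resolution is to swap the newly created bump with the existing cross of $p$ and $q$ and continue resolving at the old cross's location, using the fact that after the swap the r-shaped turn there belongs to $p$. This cross--bump swap is the key lemma of the whole argument, and nothing in your proposal plays its role; ``the first row at which it can cross'' is not even well-defined without it.

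Two further gaps: termination of the cascade is nontrivial (the paper uses the area under pipe $p$ as a strictly decreasing monovariant over a finite set of configurations), and you do not address it beyond listing it as something to check. And for the $\BPD(\pi\,t_{k,\alpha})$ summand, ``transport the crossing by a weight-preserving shuffle of corners'' is not a construction; the paper's move is to replace the cross of the two pipes exchanged by $t_{k,\alpha}$ with a bump tile, producing an \emph{almost} bumpless pipe dream of $\pi$ with a distinguished bump, and then feed that into the very same insertion algorithm. Unifying both source summands through this single bump-resolution procedure is what makes the inverse (and the separation of the two branches by the stopping condition) work; without it you have two ad hoc maps and no way to see they are jointly inverted by the undroop cascade.
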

We start by preparing a few technical lemmas.
In  \cite[Section 5.2]{lam2018back}, the authors defined \textbf{droop moves} on bumpless
pipe dreams. We use the same language here.
Define an \textbf{almost bumpless pipe dream}
of $\pi$ at $(i,j)$ by allowing 
a bumpless pipe dream diagram to have
exactly one bump
tile at position $(i,j)$. (Double crossing of two
pipes is still not allowed.)  Note that an almost bumpless
pipe dream may be created from a bumpless pipe dream
by drooping a pipe into
an r-tile (or undrooping into a j-tile), or replacing a ``$+$''-tile with a bump
tile without creating a double crossing. We also introduce
the terminology \textbf{r/j-shaped
turn} to refer to the corresponding
pipe segments in an r/j-tile or
bump tile.

\begin{lemma}
\label{lem:droop}
Let $(i,j)$ be the position of an r-shaped turn
of pipe $p=\pi(x)$. If there exists
$y>x$ such that $\pi\,t_{x,y}\gtrdot \pi$,
then there exist $a,b>0$ such that
$(i,j+b)$ and $(i+a,j)$ are not ``$+$''-tiles.
Pick the smallest such possible $a, b$, then
$p$ is allowed to droop into
$(i+a, j+b)$ \emph{with the possibility of creating
a bump in $(i+a, j+b)$ (but not a double crossing)}.
\end{lemma}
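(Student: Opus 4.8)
The plan is to realize the asserted droop as an explicit local rerouting of $p$ and then check, in order, that the target tile exists and that the rerouted diagram is a legal almost bumpless pipe dream. Recall that at the r-tile $(i,j)$ the pipe $p$ enters from the south (travelling up column $j$) and leaves to the east (travelling along row $i$). To droop $p$ into $(i+a,j+b)$ I reroute it around the southeast corner of the rectangle with corners $(i,j)$, $(i,j+b)$, $(i+a,j)$, $(i+a,j+b)$: the pipe now turns east at $(i+a,j)$, runs along row $i+a$ to column $j+b$, turns north at $(i+a,j+b)$, runs up column $j+b$ to row $i$, and turns east again at $(i,j+b)$, after which it resumes its old eastward path unchanged. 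The tile $(i,j)$ is vacated and becomes blank, $(i+a,j)$ and $(i,j+b)$ become r-tiles, and $(i+a,j+b)$ becomes a j-tile---or, if that tile was already occupied by a turning pipe, a bump tile. I would first check on the minimal case $a=b=1$ that this agrees exactly with the droop of \cite[Section 5.2]{lam2018back}, so that the general move is its evident extension.

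Next I would establish the existence of $a$ and $b$; this is the step that genuinely uses the hypothesis. Translating the covering relation $\pi\,t_{x,y}\gtrdot\pi$ into pipe language produces a pipe $q=\pi(y)$ that exits below $p$, starts to the right of $p$, and does not cross $p$. I would use $q$ to locate the target region: the presence of a non-crossing pipe $q$ prevents row $i$ and column $j$ from being saturated with crossings all the way to the grid boundary, so scanning east along row $i$ and south along column $j$ must reach a non-``$+$''-tile. Taking the first such tiles yields the minimal $a$ and $b$. Without a raising pair available, $p$ can be hemmed in by crossings out to the boundary and no legal droop target exists, which is precisely why the hypothesis is imposed.

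The heart of the proof is to verify that the rerouted diagram contains no double crossing. By minimality of $a$ and $b$, the tiles $(i,j+1),\dots,(i,j+b-1)$ and $(i+1,j),\dots,(i+a-1,j)$ are all ``$+$''-tiles, so along the top edge $p$ crosses a block of vertical pipes $V_1,\dots,V_{b-1}$ and along the left edge a block of horizontal pipes $H_1,\dots,H_{a-1}$. The key claim is that each $V_c$ threads straight down through the rectangle and each $H_r$ straight across it; granting this, the reroute simply slides each crossing from the top/left edge to the bottom/right edge, so every such pipe still meets $p$ exactly once and no crossing is created or destroyed. The assumptions that $(i,j+b)$ and $(i+a,j)$ are not ``$+$''-tiles guarantee that the new corners at $(i,j+b)$ and $(i+a,j)$ involve only $p$, i.e.\ are genuine r-tiles rather than double points, and the only genuinely new local interaction is at $(i+a,j+b)$, where $p$'s new north turn may coincide with the pipe $q$, producing at worst a single bump.

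The main obstacle, where I expect to spend the most care, is justifying the straight-threading claim: ruling out that one of the $V_c$ or $H_r$ (or some third pipe) turns inside the rectangle. A turn in the interior would either delete a needed crossing of $p$, altering the underlying permutation in an unintended way, or manufacture a second crossing of $p$ with some pipe. I would dispose of this by a case analysis on the possible tile at an interior position, using minimality of $a,b$ to contradict the existence of a nearer non-``$+$''-tile, and using reducedness of the original diagram (no two pipes cross twice) to forbid the double-crossing configurations. Finally I would confirm directly that the coincidence of $p$ and $q$ at $(i+a,j+b)$ is a bump and not a double crossing, again invoking that $p$ and $q$ do not cross in the original diagram.
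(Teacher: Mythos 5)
Your plan follows the paper's proof in all essentials: existence of $a,b$ is deduced from the Bruhat hypothesis by contradiction with a saturated row/column, and the harmlessness of the droop is reduced to the claim that the crossing pipes thread straight through the rectangle, so the reroute merely slides each crossing from the top/left edge to the bottom/right edge. Two points deserve tightening. First, the existence step is where you stop short of an argument: saying that the non-crossing pipe $q$ ``prevents saturation'' is the conclusion, not the mechanism. The actual mechanism (and the paper's) is that if every tile of row $i$ east of $(i,j)$ were a ``$+$''-tile, then the entire quadrant $\{(i',j'): i'\ge i,\ j'\ge j\}\setminus\{(i,j)\}$ would be forced to consist of ``$+$''-tiles, whence $p$ exits at row $i=x$ starting from column $j=\pi(x)$ and every pipe exiting at a row $y>x$ enters that quadrant from the west of column $j$, so $\pi(y)<\pi(x)$ for all $y>x$ --- contradicting the existence of a covering transposition $t_{x,y}$ with $y>x$. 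Second, a small inaccuracy in your description of the move: $(i+a,j)$ and $(i,j+b)$ become r-tiles only when they were previously ``$|$''- and ``$-$''-tiles; if either was a j-tile of $p$ (which minimality permits), it becomes a ``$-$''- or ``$|$''-tile instead, since $p$ then arrives there from the west (resp.\ leaves to the north) rather than from below (resp.\ to the east). This does not affect your crossing count, but the straight-threading claim itself is justified exactly as you suspect: a j-tile in the interior would identify some $V_c$ with some $H_r$ and force that pipe to cross $p$ twice, so reducedness (together with the adjacency constraints propagated inward from the ``$+$''-tiles on row $i$ and column $j$) is what forces the interior to be all crossings --- minimality of $a,b$ only seeds this induction and does not by itself constrain the interior.
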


\begin{figure}[H]
    \centering
    
    \subfloat{\includegraphics[trim=50 1400 500 50,clip, scale=0.1]{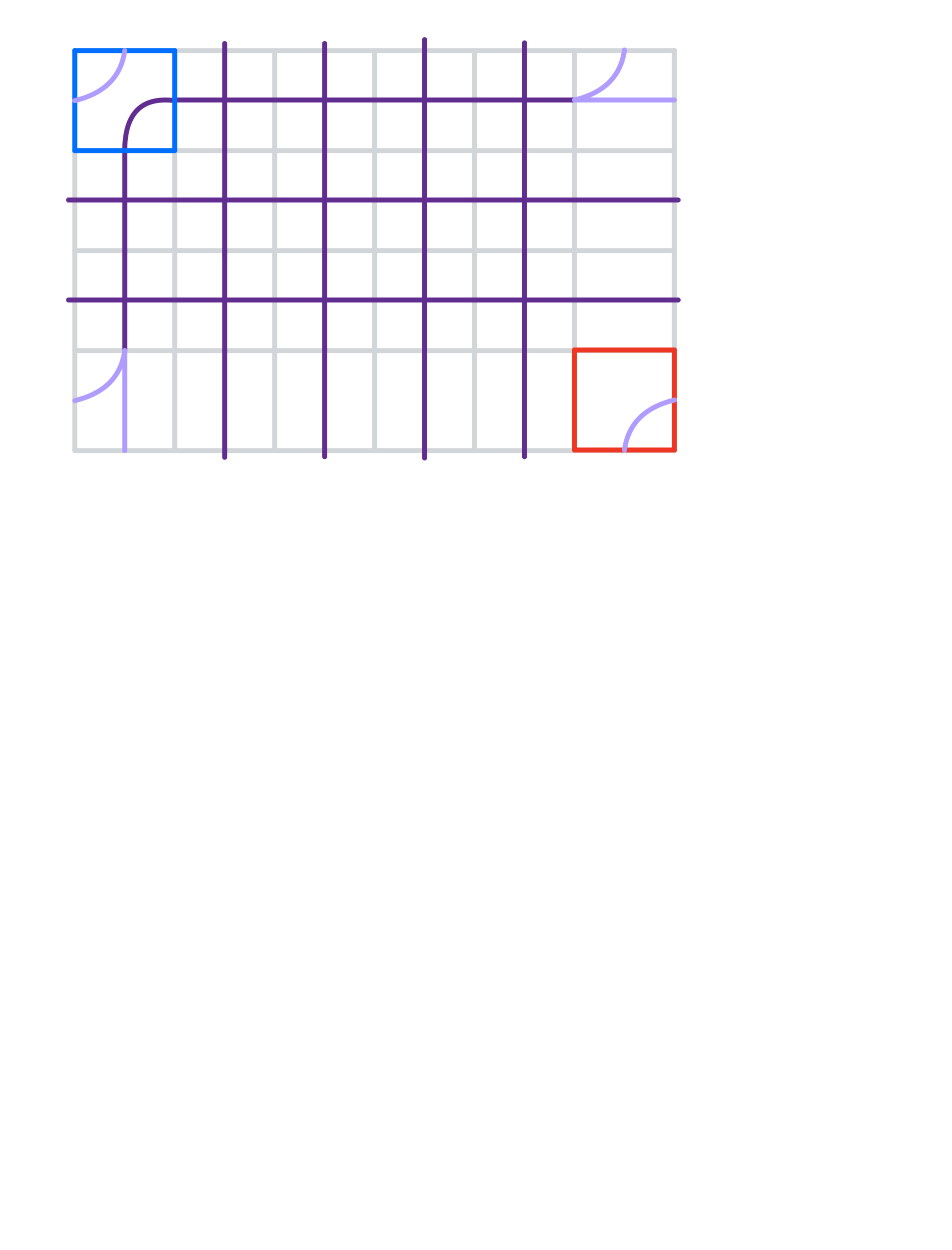} }%
    \qquad
    \subfloat{\includegraphics[trim=50 1400 500 50,clip, scale=0.1]{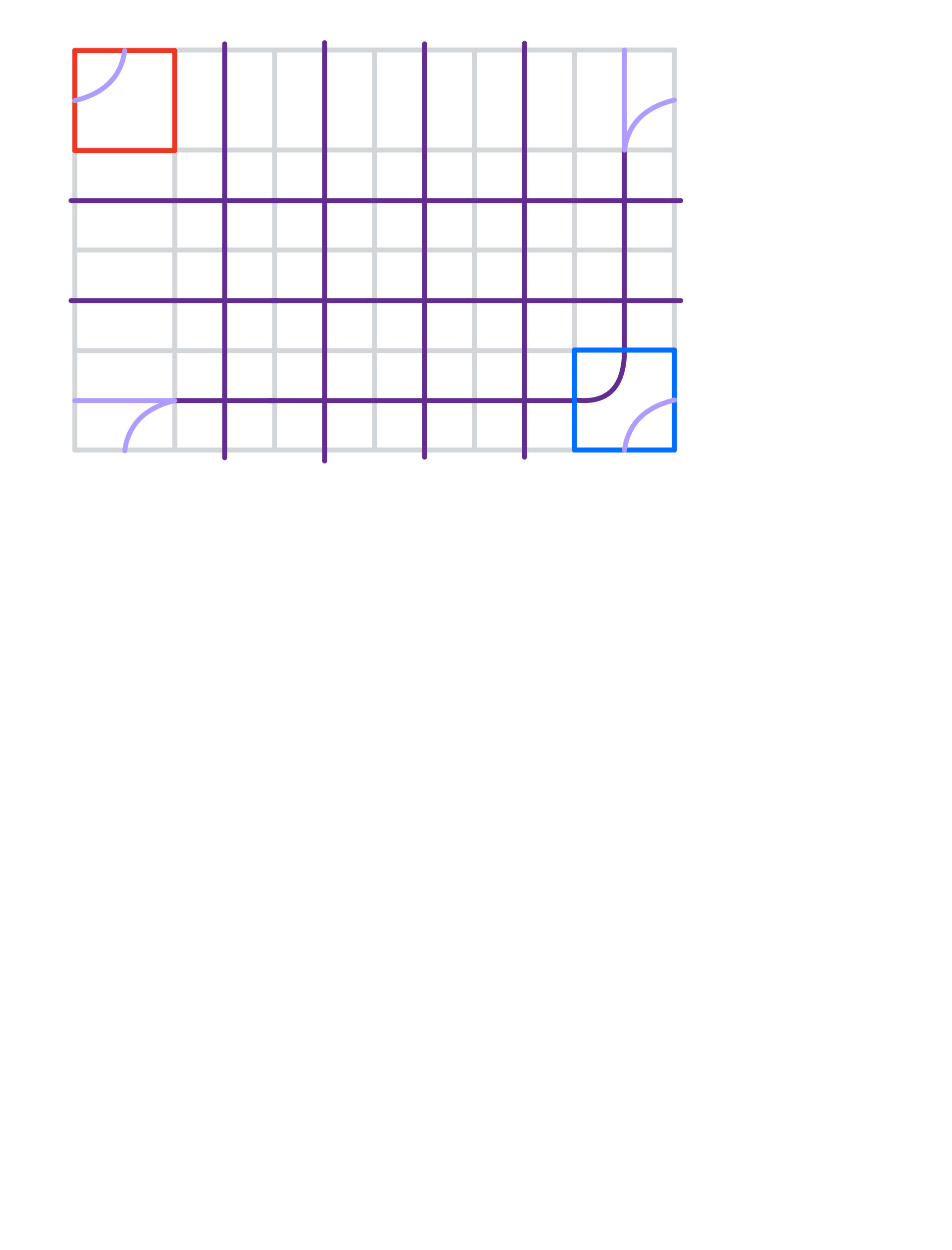} }%
    \caption{Droop to the closest tile (light purple indicates possibilities)}
    \label{fig:droop}
\end{figure}

\begin{proof}
Suppose for all $j'>j$, $(i,j')$ is a ``$+$''-tile.
Then since all pipes need to exit from the
east edge, the only way to fill the region
$(i',j')$ with $i'\ge i$, $j'\ge j$, 
$(i,j)\neq(i',j')$ is with ``$+$''-tiles. 
This implies that there is no $y>x$ such that
$\pi\,t_{x,y}\gtrdot \pi$. The same reasoning 
applies if for all $i'>i$, $(i',j)$ is a 
``$+$''-tile. 

Now pick the smallest $a,b$ as stated in the
lemma. Observe that in this case
$(i,j+b)$ is either a ``$-$''-tile or 
a j-tile,  $(i+a,j)$ is either a ``$|$''-tile
or a j-tile, and all tiles $(i',j')$
with $i<i'<i+a$ and $j<j'<j+b$ must be 
``$+$''-tiles. This means that
all $(i',j+b)$ for $i<i'<i+a$ must be ``$-$''-tiles, and all $(i+a,j')$ for
$j<j'<j+b$ must be ``$|$''-tiles. 
Therefore, the tile at $(i+a,j+b)$ has 
a ``$-$''-tile above and a ``$|$''-tile
to the left, so it can only be a blank or 
r-tile. It is then easy to see
$p$ may droop into $(i+a,j+b)$, with the 
possibility of creating a bump but not
a double crossing.
\end{proof}

Intuitively, Lemma \ref{lem:droop} is about
finding the closest tile
an r-shaped corner can droop
into.
It is not hard to see that the
droop move described in this lemma
has an inverse operation.

\begin{lemma}
\label{lem:undroop}
Let $(i,j)$ be the position of a j-shaped turn
of pipe $p=\pi(x)$. Pick the largest
$a,b >0$ such that the tiles on row
$i$ strictly
between $(i,j-b)$ and $(i,j)$ are ``$+$''-tiles
and the tiles on column $j$ strictly between
$(i-a,j)$ and $(i,j)$ are ``$+$''-tiles.
Then $p$ is allowed to undroop 
to $(i-a, j-b)$, with the possibility
of creating a bump in $(i-a,j-b)$.
\end{lemma}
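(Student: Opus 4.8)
The plan is to establish Lemma \ref{lem:undroop} as the exact inverse of the droop move constructed in Lemma \ref{lem:droop}, and the natural strategy is to mirror that proof with the roles of r-tiles and j-tiles interchanged. First I would fix the j-shaped turn at $(i,j)$ belonging to pipe $p=\pi(x)$. By the definition of a j-tile, the pipe $p$ enters $(i,j)$ horizontally from the west and exits vertically to the north, so immediately to the left of $(i,j)$ the tiles along row $i$ carry $p$ horizontally, and above $(i,j)$ the tiles along column $j$ carry $p$ vertically. The hypothesis selects the maximal $a,b>0$ for which the intermediate tiles on row $i$ strictly between $(i,j-b)$ and $(i,j)$ are all \quo{$+$}-tiles, and likewise the intermediate tiles on column $j$ strictly between $(i-a,j)$ and $(i,j)$ are all \quo{$+$}-tiles. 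Maximality means that $(i,j-b)$ is the first tile to the west of $(i,j)$ that is not a \quo{$+$}-tile, and $(i-a,j)$ is the first tile to the north of $(i,j)$ that is not a \quo{$+$}-tile.

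Next I would pin down the local tile types forced by this maximal choice, exactly as in the proof of Lemma \ref{lem:droop}. Since pipe $p$ runs horizontally along row $i$ through the \quo{$+$}-tiles and $(i,j-b)$ is the first non-\quo{$+$}-tile encountered, $(i,j-b)$ must be either a \quo{$-$}-tile or an r-tile (these are the tiles where $p$ travels horizontally and either continues straight or turns). Symmetrically $(i-a,j)$ must be a \quo{$|$}-tile or an r-tile. The interior rectangle, tiles $(i',j')$ with $i-a<i'<i$ and $j-b<j'<j$, must be filled entirely by \quo{$+$}-tiles, which in turn forces all of $(i',j-b)$ for $i-a<i'<i$ to be \quo{$|$}-tiles and all of $(i-a,j')$ for $j-b<j'<j$ to be \quo{$-$}-tiles. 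Consequently the corner tile $(i-a,j-b)$ sits below a \quo{$-$}-tile and to the right of a \quo{$|$}-tile, so it can only be a blank tile or an r-tile. In either case the undroop of $p$ into $(i-a,j-b)$ is legal: it reroutes $p$ so that it turns at $(i-a,j-b)$ instead of at $(i,j)$, converting the intervening \quo{$+$}-tiles and straight segments accordingly, and it produces a bump at $(i-a,j-b)$ precisely when that tile was already an r-tile (carrying the r-shaped turn of another pipe).

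Finally I would verify that this move creates no double crossing, which is what makes it an \emph{almost} bumpless pipe dream in the worst case rather than an illegal diagram. The only new incidence introduced is at $(i-a,j-b)$: along its old trajectory the pipe $p$ crossed every pipe it met transversally, and rerouting through the rectangle of \quo{$+$}-tiles does not change which pipes $p$ meets, only where. Because the rectangle interior is uniformly \quo{$+$}-tiles and the boundary tiles are uniquely determined as above, the set of crossings of $p$ with every other pipe is preserved outside the corner, and the corner itself introduces at most the single bump, never a second crossing of an already-crossed pair. The main obstacle here is bookkeeping the boundary cases: confirming that the forced types of $(i,j-b)$ and $(i-a,j)$ are genuinely compatible with the maximality of $a$ and $b$ (i.e. that the first non-\quo{$+$}-tile really is of the stated form and not, say, a blank tile that would break the argument), and checking that the resulting corner admits the undroop when $(i-a,j-b)$ is an r-tile without forcing a double crossing. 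Once these local constraints are resolved, exactness of the inverse relationship with Lemma \ref{lem:droop} follows by matching the chosen $(a,b)$ in each direction, since both lemmas select the extremal rectangle of \quo{$+$}-tiles bounded by the same two non-\quo{$+$}-tiles.
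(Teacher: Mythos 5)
Your overall strategy---transcribing the proof of Lemma \ref{lem:droop} under the r/j symmetry---is exactly what the paper does: its proof of Lemma \ref{lem:undroop} consists of one short observation plus the remark that the rest is symmetric to the second half of Lemma \ref{lem:droop}. However, your transcription gets the local geometry wrong at the decisive step. Here the rectangle has $(i,j)$ as its \emph{southeast} corner and $(i-a,j-b)$ as its \emph{northwest} corner, so the forced boundary tiles come out the other way around from what you wrote: the tiles $(i',j-b)$ with $i-a<i'<i$ must be ``$-$''-tiles (after the undroop they acquire a vertical strand of $p$ and become ``$+$''-tiles), and the tiles $(i-a,j')$ with $j-b<j'<j$ must be ``$|$''-tiles. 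Consequently $(i-a,j-b)$ sits \emph{above} a tile with no pipe on its north edge and to the \emph{left} of a tile with no pipe on its west edge, so it has no pipe on its south or east edge and can only be a blank tile or a \emph{j-tile}---not ``a blank tile or an r-tile'' as you claim (that is the conclusion for the droop, copied verbatim). The distinction matters: after the undroop, $p$ occupies the r-shaped (south-to-east) corner of $(i-a,j-b)$, so a pre-existing j-tile there yields a legal bump, whereas a pre-existing r-tile would put two pipes on the same south edge and the move would be illegal. Your statement that ``it produces a bump at $(i-a,j-b)$ precisely when that tile was already an r-tile'' is therefore false, and it is inconsistent with Lemma \ref{lem:upperj} and Algorithm \ref{alg:backward}, which rely on the new bump consisting of $p$'s r-shaped turn together with the other pipe's j-shaped turn.

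A second, smaller omission: the one point the paper's proof makes explicitly is why the maximal $a,b$ exist with $(i,j-b)$ and $(i-a,j)$ genuinely non-``$+$''-tiles, namely that a bumpless pipe dream can have no ``$+$''-tile in its first row or first column (a cross there would send a pipe out of the north or west edge of the grid). In Lemma \ref{lem:droop} the analogous fact required the Bruhat-order hypothesis; here it is automatic, but it still needs to be stated, since your reading of maximality as ``$(i,j-b)$ is the first non-``$+$''-tile encountered'' silently assumes that such a tile is reached before the pipe leaves the grid.
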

\begin{proof}
Note that $a,b$ always exist since
bumpless pipe dreams cannot have crosses
on the north or west border. The rest of
the proof is symmetric to the proof of the
second half of
Lemma \ref{lem:droop}.
\end{proof}

\begin{lemma}
\label{lem:lowerr}
Suppose  $p=\pi(x)$ and $q=\pi(y)$ are two 
pipes that cross once
and bump once, and that 
the j-shaped corner in the bump tile belongs to 
$p$. If we swap the positions of 
the cross and the bump, then in the new bump tile,
the r-shaped turn belongs to $p$.
\end{lemma}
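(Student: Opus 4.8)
The plan is to treat the swap as a purely local operation on the two tiles carrying the cross and the bump, and to pin down the new elbow assignment by tracing the two pipes around the region they bound. First I would set up the local picture. Write $B$ for the bump tile and $C$ for the unique tile where $p$ and $q$ cross. Since $p$ and $q$ meet \emph{only} at $B$ and $C$, the arc of $p$ joining $B$ to $C$ and the arc of $q$ joining $B$ to $C$ together bound a region, just as in the region analyses used for Lemmas \ref{lem:droop} and \ref{lem:undroop}; the two remaining arcs run off to the endpoints of $p$ and of $q$. At $B$ the pipe $p$ occupies the north and west edges (the j-shape) and $q$ occupies the south and east edges (the r-shape). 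Tracing each pipe along the boundary of this region from $B$ to $C$ records which edges $p$ and $q$ occupy at the cross $C$ (one horizontal, one vertical). This incidence data is all the rest of the argument needs.

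Next I would describe the swap as changing only the two tiles $B$ and $C$: the bump at $B$ becomes a $+$-tile and the cross at $C$ becomes a bump, with every other tile left untouched (in particular all tiles along the two boundary arcs and all crossings interior to the region). At $B$ the four incident arcs must now reconnect as a crossing, pairing north with south and west with east. At $C$ the four incident arcs must reconnect into a bump, and there are a priori two ways to do this.

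The key step is to show the bump orientation at $C$ is forced, and then to read off the answer. One of the two orientations re-pairs the arcs so that, following the strand out of the new $+$-tile at $B$, one returns to $B$ a second time; equivalently $p$ and $q$ would cross twice, or a single strand would join two south-edge endpoints, which is impossible in an (almost) bumpless pipe dream. Hence the other orientation is forced. I would then trace the two resulting strands and verify that they close up precisely into a pipe with $p$'s endpoints and a pipe with $q$'s endpoints, so that the move is valid and preserves the underlying permutation and merely transports the bump from $B$ to $C$. Reading off the forced orientation at $C$ shows that there $p$ occupies the south and east edges, i.e.\ the r-shape, as claimed.

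The main obstacle I expect is pinning down the arc connectivity around the region: one must verify, using reducedness and planarity, that the $p$-arc leaving the bump reaches the cross on the side producing the claimed incidence, and that the two mirror configurations — the cross lying to the northeast versus to the southwest of the bump — both lead to the same conclusion. Once the incidence data at $C$ and the forcing of the bump orientation are in hand, identifying $p$ with the r-shape is immediate.
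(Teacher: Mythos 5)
The entire content of this lemma is the determination of which strand of the original crossing at $C$ --- the vertical ``$|$'' or the horizontal ``$-$'' --- belongs to $p$. Once that is known the conclusion is immediate: the swap leaves untouched the portion of $p$ running from $C$ to whichever of its two endpoints is reached without passing through $B$, so after the swap $p$ still enters $C$ from the south (if it owned the ``$|$'') or still exits $C$ to the east (if it owned the ``$-$''), and in either case it acquires the south--east elbow, i.e.\ the r-shape, of the new bump. Your proposal never carries out this determination; you explicitly defer it as ``the main obstacle I expect,'' so the essential step of the proof is missing. The paper settles it with a short global argument: travelling from south to east, the bump either follows or precedes the cross; since $p$ holds the j-shaped (north-west) corner of the bump and the two pipes cannot cross again beyond the bump, one gets $x<y$ and $p>q$ in the first case (so the ``$|$'' of the cross is $p$'s) and $y<x$ and $q>p$ in the second (so the ``$-$'' is $p$'s).

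Separately, the ``forced orientation'' step is a red herring. A bump tile has a unique arc-pairing (north with west, south with east), so there are not ``a priori two ways'' to reconnect the four arc-ends at $C$ into a bump: nothing is there to be forced, and no appeal to double crossings is needed or available at that point. What actually has to be computed --- and what the forcing argument cannot supply --- is which global pipe owns the south and east arc-ends of $C$ after the resplicing at both $B$ and $C$. That is exactly the incidence data you postponed. Your region-tracing idea could in principle be developed into a planarity proof of that incidence, but as written the argument establishes nothing beyond the trivial observation that the swap is a local modification.
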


\begin{proof}
Suppose $p$ and $q$ cross at $(i,j)$ before the swap.
Consider the pipes travelling from 
south to east. 
If the bump is after the
cross, we must have $p=\pi(x)>q=\pi(y)$
and $x<y$, namely the ``$|$'' in the cross
at $(i,j)$ must belong to $p$. After the swap, 
$p$ still enters from the bottom of $(i,j)$, and
therefore it makes an r-shaped turn. If the bump
is before the cross, we must have 
$q=\pi(y)>p=\pi(x)$ and $y<x$, namely the
``$-$'' in the cross at $(i,j)$ must belong to $p$.
After the swap, $p$ still exits from the right,
and therefore makes an r-shaped turn at $(i,j)$.
\end{proof}
\begin{figure}[H]
    \centering
    \includegraphics[scale=0.1]{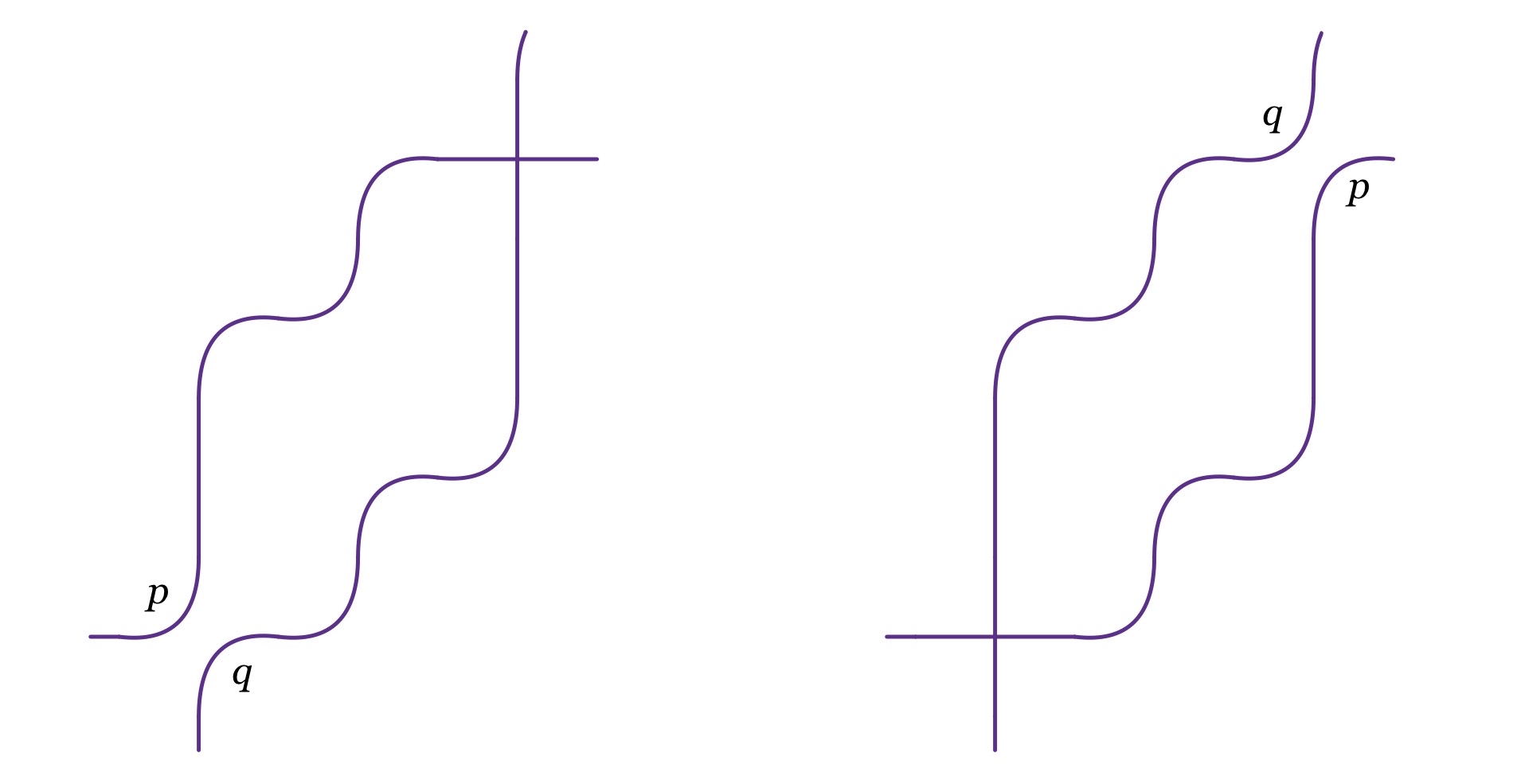}
    \caption{A case of bump-cross swap}
    \label{fig:crossbump}
\end{figure}
Again, we have the opposite version of
this statement, whose proof we omit.

\begin{lemma}
\label{lem:upperj}
Suppose  $p=\pi(x)$ and $q=\pi(y)$ are two 
pipes that cross once
and bump once, and that 
the r-shaped corner in the bump tile belongs to 
$p$. If we swap the positions of 
the cross and the bump, then in the new bump tile,
the j-shaped turn belongs to $p$.
\end{lemma}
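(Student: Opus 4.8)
The plan is to deduce this statement directly from Lemma \ref{lem:lowerr} by exploiting the symmetry between the two pipes that meet at the bump. The one structural fact I need is that every bump tile carries exactly one r-shaped turn and exactly one j-shaped turn (this is precisely the pair of ``r/j-shaped turns'' introduced just before Lemma \ref{lem:droop}), and that these two turns are traversed by the two \emph{distinct} pipes $p$ and $q$. Consequently the hypothesis ``the r-shaped corner of the bump belongs to $p$'' is equivalent to ``the j-shaped corner belongs to $q$'', and symmetrically, in the new bump tile ``the r-shaped turn belongs to $q$'' is equivalent to ``the j-shaped turn belongs to $p$''.

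Granting this, the argument is immediate. I would apply Lemma \ref{lem:lowerr} with the roles of $p$ and $q$ interchanged, taking $q$ as the distinguished pipe. Since $q$ owns the j-shaped corner of the original bump, the hypothesis of that lemma is satisfied, and it concludes that after swapping the cross and the bump, the r-shaped turn of the new bump belongs to $q$. By the complementarity noted above, this says exactly that the j-shaped turn of the new bump belongs to $p$, which is the desired conclusion. The legitimacy of the relabeling rests only on the fact that the whole configuration---two pipes crossing once and bumping once---is symmetric under $p \leftrightarrow q$ (and correspondingly $x \leftrightarrow y$), so nothing in the reasoning of Lemma \ref{lem:lowerr} is lost.

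If one instead prefers a self-contained proof that does not quote Lemma \ref{lem:lowerr}, I would mirror its casework verbatim. Writing the crossing at $(i,j)$ and reading the pipes from south to east, I would split on whether the bump lies before or after the cross. In the ``bump after the cross'' case the hypothesis forces $q=\pi(y) > p=\pi(x)$ with $y<x$, so the ``$-$'' of the cross belongs to $p$; after the swap $p$ still enters $(i,j)$ from the west and hence makes a j-shaped turn. In the ``bump before the cross'' case the hypothesis forces $p=\pi(x) > q=\pi(y)$ with $x<y$, so the ``$|$'' of the cross belongs to $p$; after the swap $p$ still exits $(i,j)$ at the north, again a j-shaped turn.

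The main---and essentially only---point to get right is the complementarity bookkeeping: that each bump tile pairs one r-turn with one j-turn, owned by the two different pipes, so that ``$p$ owns the r-turn'' and ``$q$ owns the j-turn'' are genuinely interchangeable statements. Once this is pinned down there is no real obstacle, since the $p \leftrightarrow q$ symmetry reduces everything to the already-established Lemma \ref{lem:lowerr}, and the direct casework is a line-by-line mirror of the one carried out there.
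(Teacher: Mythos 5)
Your proposal is correct and matches what the paper intends: the paper explicitly omits this proof as the ``opposite version'' of Lemma~\ref{lem:lowerr}, and your reduction via the $p\leftrightarrow q$ symmetry (using that a bump tile carries exactly one r-turn and one j-turn, owned by the two distinct pipes) is precisely that argument. Your optional self-contained casework also mirrors the proof of Lemma~\ref{lem:lowerr} faithfully, including the correct bookkeeping that the pipe retains its entry edge at the old cross when the bump lies after the cross and its exit edge when the bump lies before it.
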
 

\begin{figure}[H]
    \centering
    \begin{tikzpicture}
    \node[anchor=south west,inner sep=0] (image) at (0,0)  
     {\includegraphics[trim=45 1350 500 45,clip, scale=0.1]{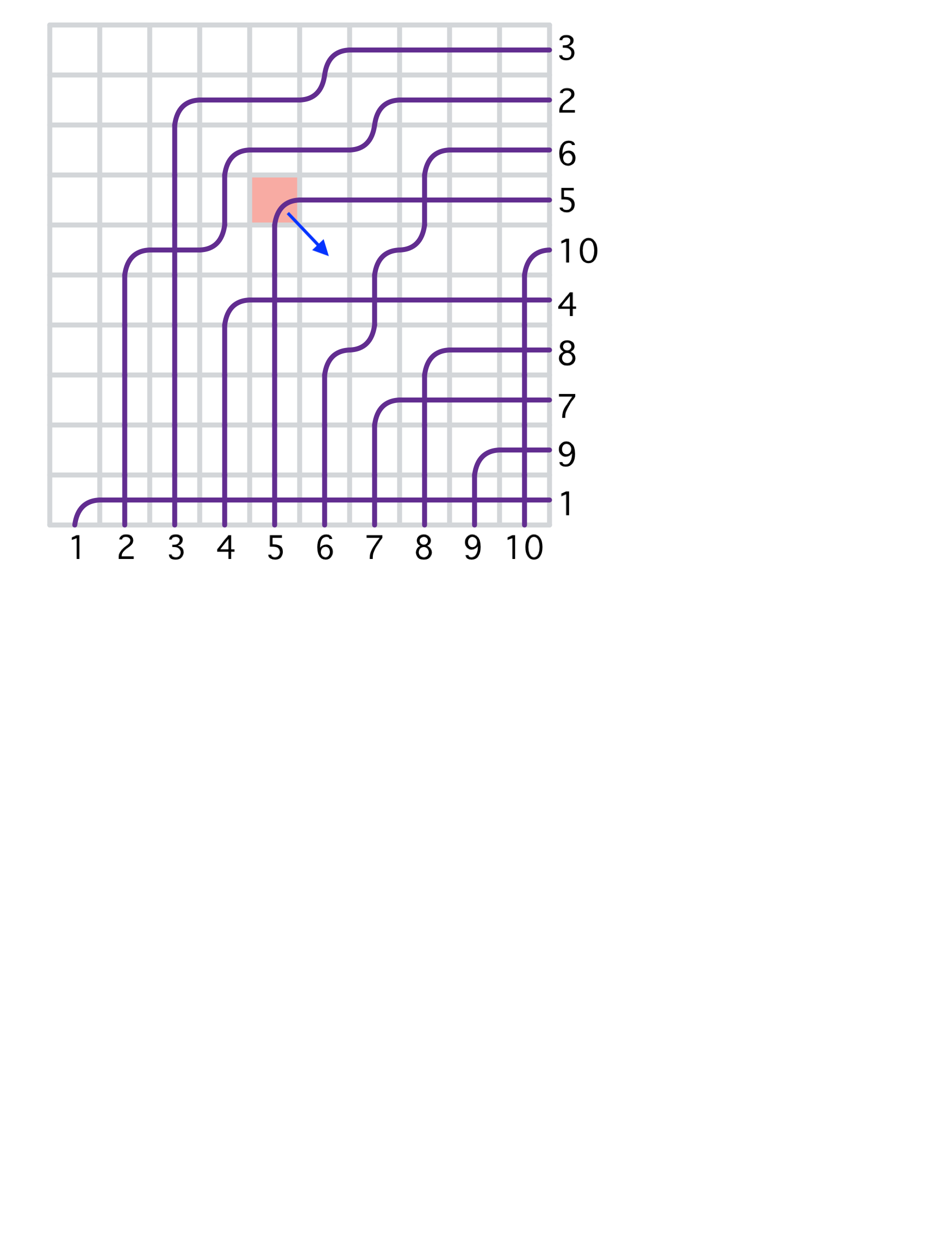}};
    \draw[black, thick, ->] (4.2,2) -- (4.8,2);
    \node[anchor=south west,inner sep=0] (image) at (5,0)  
     {\includegraphics[trim=45 1350 500 45,clip, scale=0.1]{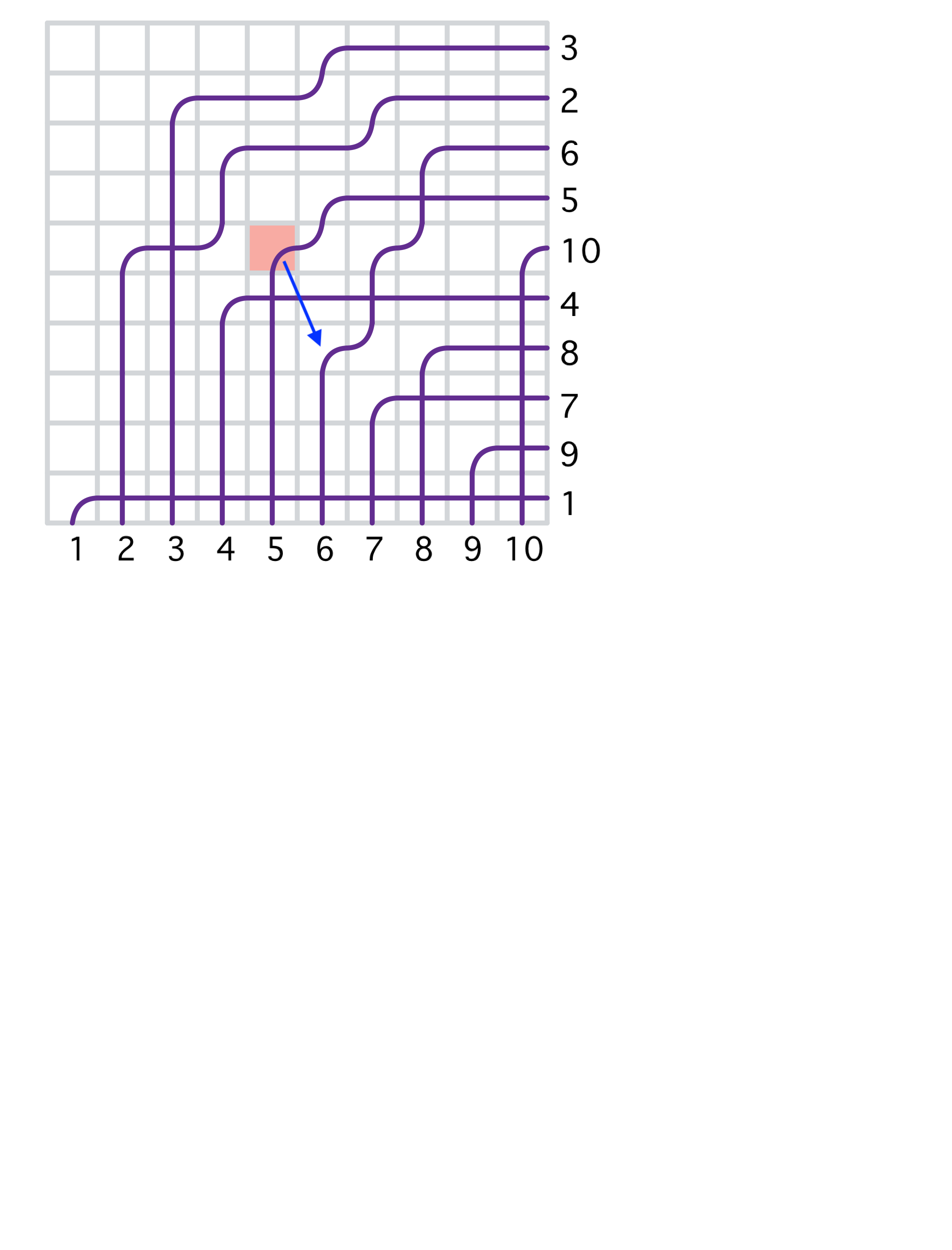}};
    \draw[black, thick, ->] (9.2,2) -- (9.8,2);
    \node[anchor=south west,inner sep=0] (image) at (10,0)
     {\includegraphics[trim=45 1350 500 45,clip, scale=0.1]{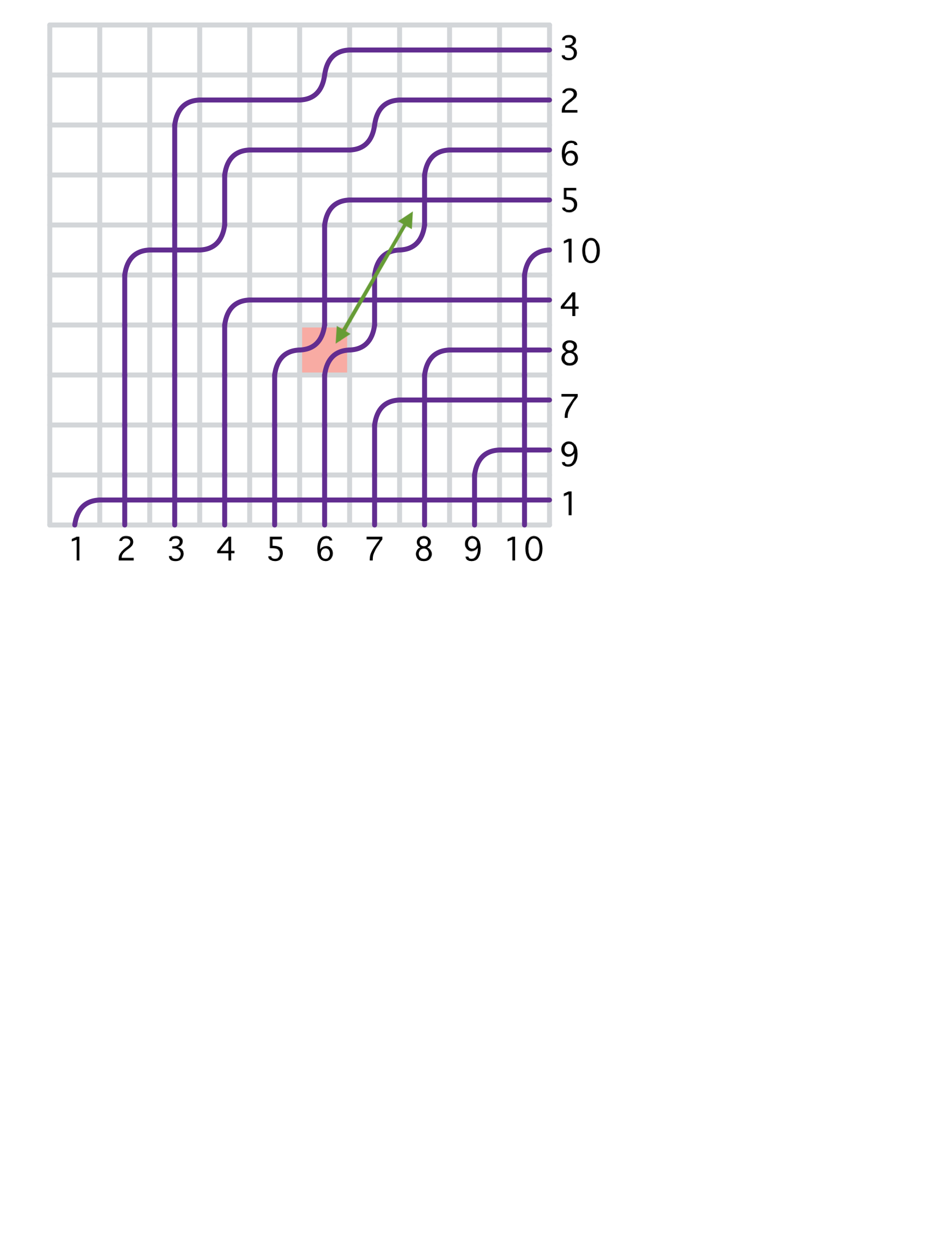}};
    \draw[black, thick, ->] (2,-3) -- (2.6,-3);
    \node[anchor=south west,inner sep=0] (image) at (2.8,-5)
     {\includegraphics[trim=45 1350 500 45,clip, scale=0.1]{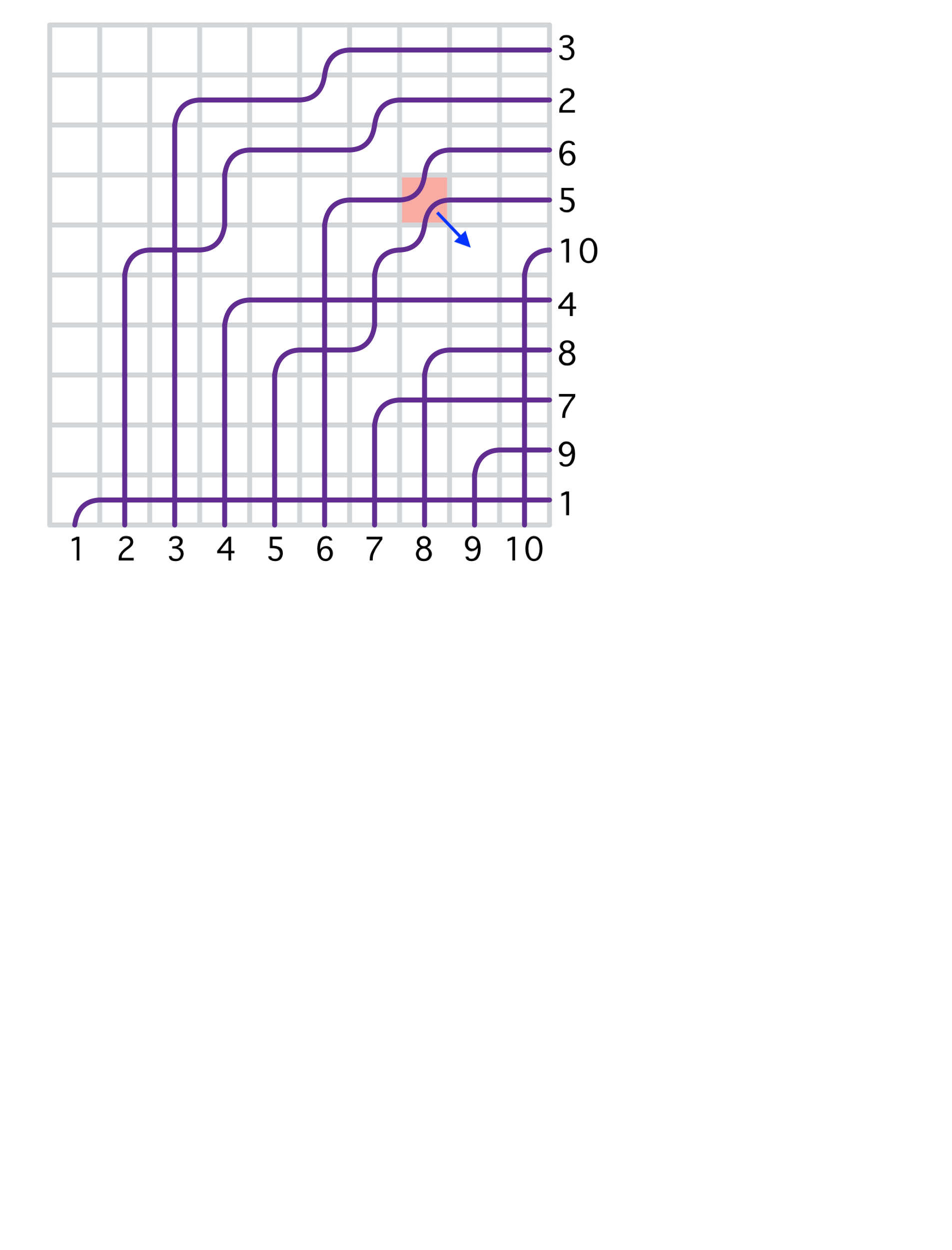}};
    \draw[black, thick, ->] (7,-3) -- (7.6,-3);
    \node[anchor=south west,inner sep=0] (image) at (7.8,-5)
     {\includegraphics[trim=45 1350 500 45,clip, scale=0.1]{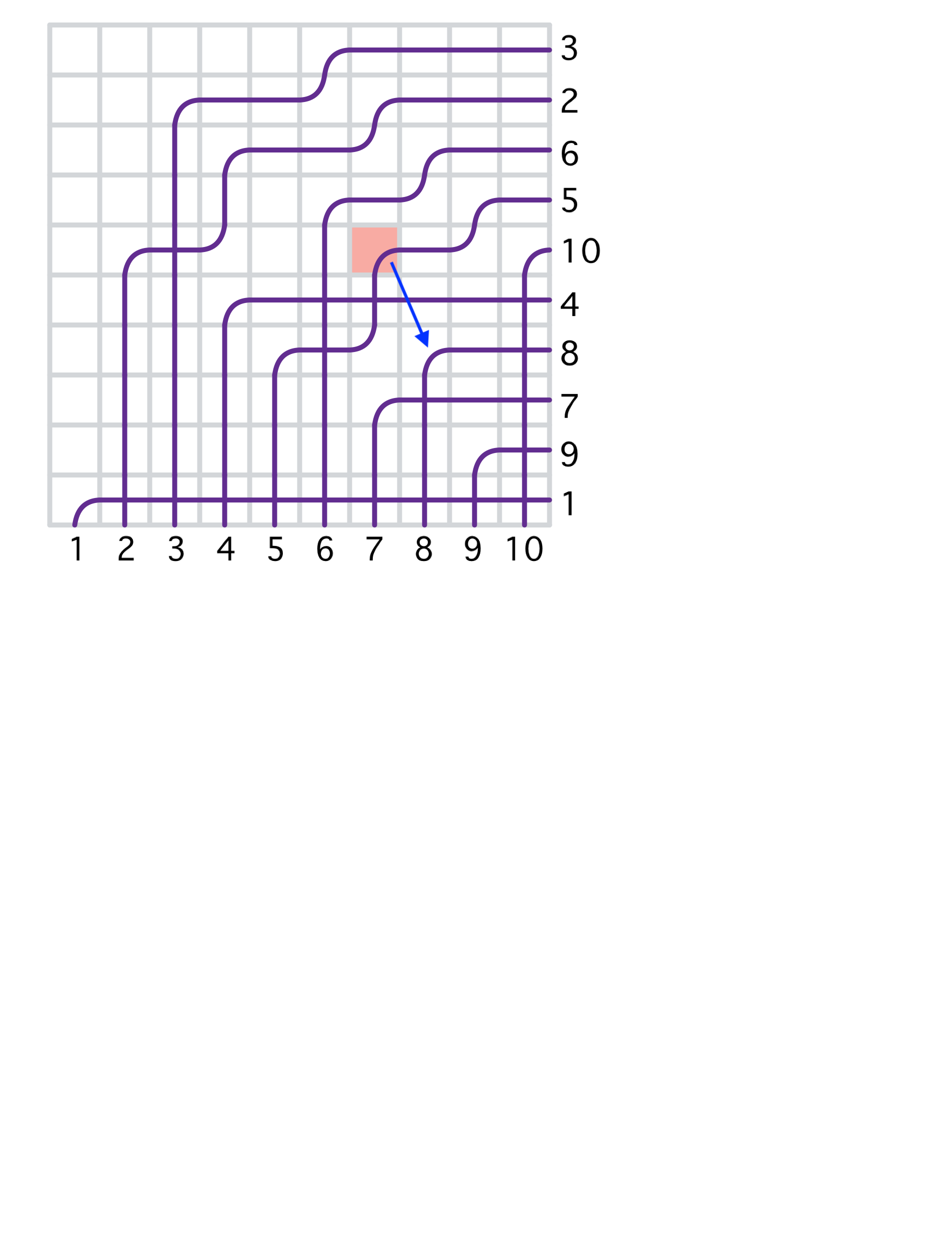}};
    \draw[black, thick, ->] (2,-8) -- (2.6,-8);
    \node[anchor=south west,inner sep=0] (image) at (2.8,-10)
     {\includegraphics[trim=45 1350 500 45,clip, scale=0.1]{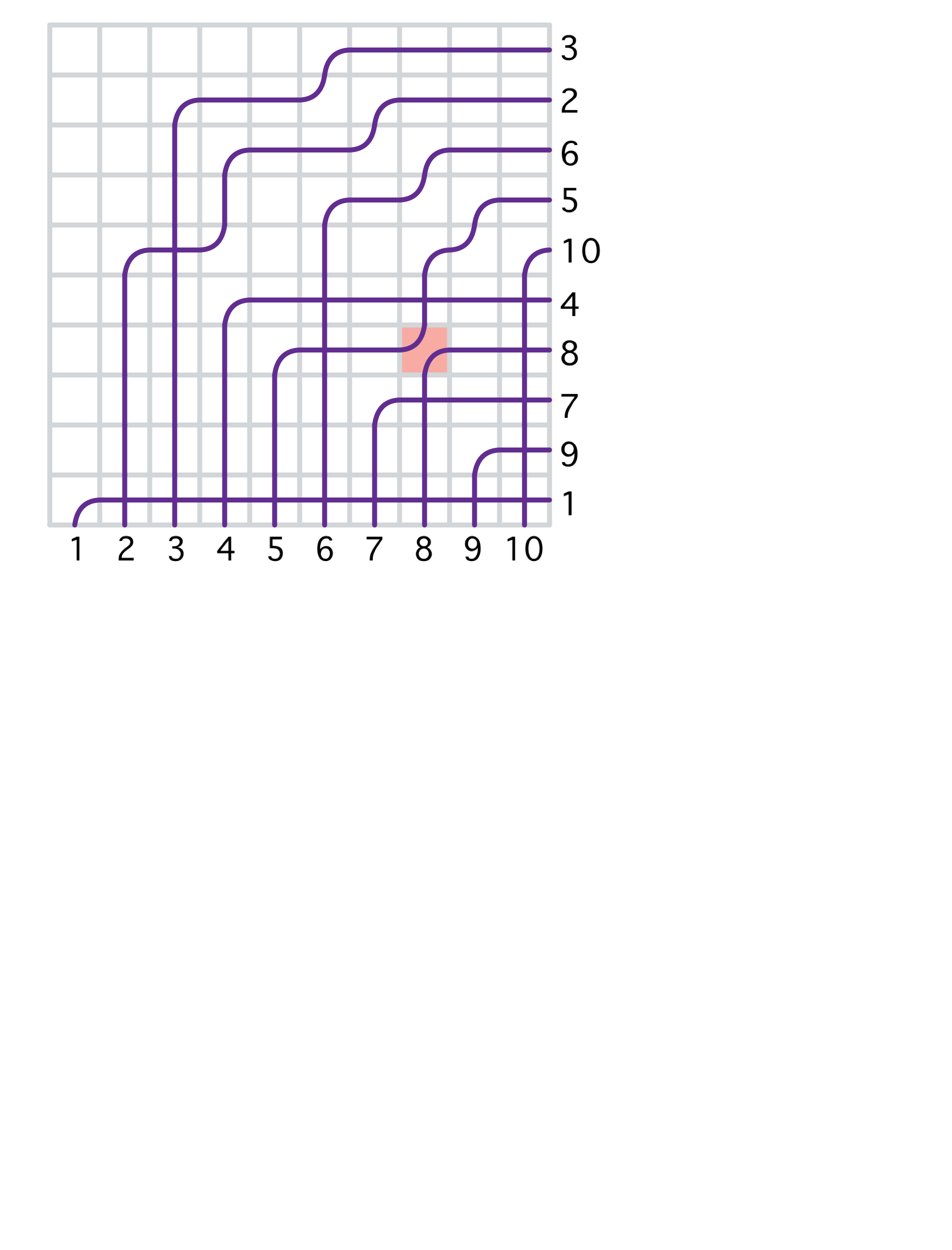}};
    \draw[black, thick, ->] (7,-8) -- (7.6,-8);
    \node[anchor=south west,inner sep=0] (image) at (7.8,-10)
     {\includegraphics[trim=45 1350 500 45,clip, scale=0.1]{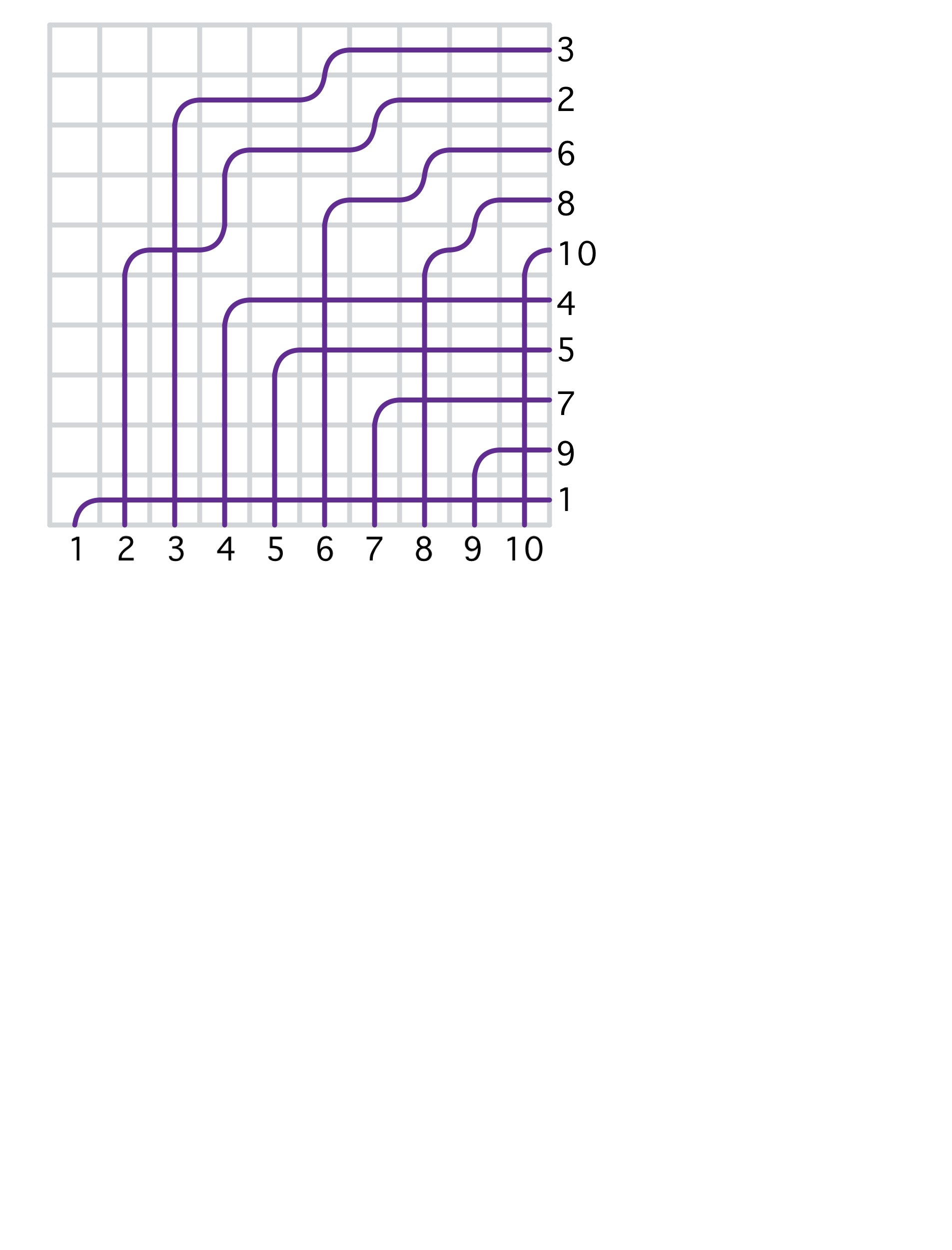}};
\end{tikzpicture}
\caption{Insertion of a blank tile at
      $(i,j)=(4,5)$ for a BPD of
      $\pi=[3,2,6,5,10,4,8,7,9,1]$}
    \label{fig:example}
\end{figure}

Figure \ref{fig:example} shows
a walk-through of the
algorithm below, where we start with inserting a blank
tile at an r-tile on row 4.
The reader is invited to 
guess the the algorithm
before reading the description.
The shaded square in each diagram 
denotes  the r-tile
at which a blank tile is about to be inserted,
or a bump tile that needs to be resolved.

We now describe an algorithm for inserting a blank tile
at position $(i,j)$ where there is an r-tile
or, as will be made clear below, to resolve a conflict
where there is temporarily a bump tile. Suppose this r-shaped corner belongs to pipe $p=\pi(x)$.
Let $(i+a,j+b)$,
$a,b>0$, be the tile southeast to $(i,j)$ such that
the tiles on the $i$th row strictly between $(i,j)$ and $(i, j+b)$ are all
``$+$''-tiles, and the tiles in the $j$th column between $(i,j)$
and $(i+a,j)$ are all ``$+$''-tiles. 
By Lemma \ref{lem:droop}, $p$ may droop
into $(i+a,j+b)$ with the possibility of creating
a bump.
We let the pipe $p$ droop into $(i+a,j+b)$. 
 If $(i+a, j+b)$ used to be a blank tile (now a ``j''),
we have newly occupied a blank tile on row $i+a$, so 
we find the r-tile on row $i+a$ that belongs to $p$ and 
repeat the same algorithm for inserting a blank tile at
an r-tile, as before. 
(Note that such an r-tile always exists.)
The other possibility is that
$(i+a, j+b)$ used to be an r-tile (now a bump). 
Suppose the  bump is with pipe $q=\pi(y)$. If
$\pi\,t_{x,y}\gtrdot\pi $, or in other words, $p$ and $q$ 
do not cross, we replace the bump tile
with a ``$+$''-tile, and terminate the algorithm. If $p$
and $q$  cross each other, we find the tile $(i',j')$
where the crossing is, replace the existing bump tile in 
$(i+a, j+b)$ with a cross, and replace the cross in $(i',j')$
with a bump tile. After this, by  
Lemma \ref{lem:lowerr},
the r-shaped turn
in $(i',j')$ must belong to $p$. We resolve this bump
by going back to the beginning of the algorithm and repeat the
process.

We give the pseudocode of the algorithm below. 
Let $\BPD'_{i,j}(\pi)$ denote the set of bumpless pipe
dreams of $\pi$ that have an r-tile at $(i,j)$, plus the
\emph{almost} bumpless pipe dreams of $\pi$ 
which have exactly one
bump at position $(i,j)$.
For $D\in \BPD'_{i,j}(\pi)$, let $D(m,n)$ denote
the tile in $D$ at position $(m,n)$.
\vskip 0.3em
{\small
\begin{algorithm}[H]
\SetKwProg{Fn}{}{ } {}\SetKwFunction{Fun}{insert\_blank\_or\_resolve\_bump\_at\_r}%
\Fn(){\Fun{D,i,j}}{
\KwData{$D\in \BPD'_{i,j}(\pi)$, where the r-shaped turn
in $D(i,j)$ belongs to pipe $p=\pi(x)$, which satisfies
$\exists y>x$ such that $\pi\,t_{x,y}\gtrdot \pi$.}
$a,b\gets 1,1$\;
\lWhile{$D(i+a,j)=\quo{+}$}{$a\gets a+1$}
\lWhile{$D(i,j+b)=\quo{+}$}{$b\gets b+1$}
Droop $p$ into $(i+a, j+b)$\;
\uIf{$D(i+a,j+b)=\quo{\text{j}}$}
    {$(i+a, j') \gets$ position of r-tile of $p$ on row $i+a$\;
     \Fun($D,i+a,j'$)}
    \uElse{Let $q=\pi(y)$ be the pipe that $p$ bumps  into at $(i+a,j+b)$\;
     \uIf{$\pi\,t_{x,y}\gtrdot \pi$}
         {$D(i+a,j+b)\gets\quo{+}$\;
          \Return{$D$} }
     \uElse{$(i',j')\gets$ position of existing cross of $p$ and $q$\;            
            $D(i',j') \gets$ bump tile\;
            $D(i+a,j+b)\gets\quo{+}$\;
            \Fun($D,i',j'$)}}

}

\caption{}
\label{alg:forward}
\end{algorithm}
}
\begin{prop}
\label{prop:terminate}
The algorithm  \funfont{insert\_blank\_or\_resolve\_bump\_at\_r} terminates
and produces a bumpless pipe dream of $\pi\,t_{x,y}$
for some $y>x$ such that $\pi\,t_{x,y}\gtrdot \pi$.
\end{prop}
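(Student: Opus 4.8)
The plan is to run the recursion against a single invariant and then exhibit a monovariant that forces it to bottom out. Throughout I would maintain that the current diagram $D$ is an almost bumpless pipe dream of $\pi$ with at most one bump, that this bump (when present) sits exactly at the position passed to the current call, that its r-shaped turn belongs to the fixed pipe $p=\pi(x)$, and that $x$ still admits some $y>x$ with $\pi\,t_{x,y}\gtrdot\pi$. The last clause is what licenses the droop at the top of each call via Lemma \ref{lem:droop}; the first three are preserved by each branch. In the blank branch, drooping $p$ into a blank makes it a j-tile and, when the source was a bump, resolves that bump, so $D$ stays an almost-BPD of $\pi$ with the active turn of $p$ relocated to its r-tile on row $i+a$; here one checks that such an r-tile exists and that the permutation is unchanged because drooping into a blank preserves it. In the swap branch, moving the cross of $p,q$ to $(i+a,j+b)$ and the bump to the old crossing site $(i',j')$ leaves $p$ and $q$ crossing once and bumping once, hence preserves $\pi$, and Lemma \ref{lem:lowerr} guarantees that the r-shaped turn at $(i',j')$ again belongs to $p$, reinstating the invariant.

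Granting the invariant, correctness at termination is short. The algorithm returns only from the branch in which $p$ droops onto the r-tile of a pipe $q=\pi(y)$ with $\pi\,t_{x,y}\gtrdot\pi$ and the bump is overwritten by a ``$+$''-tile. The returned diagram has no bumps, and since the no-double-crossing clause is maintained throughout (droops create at most a bump, swaps merely relocate a single crossing), no pair of pipes crosses twice; it is therefore a genuine reduced bumpless pipe dream. Converting the final bump, which was a non-crossing of $p$ and $q$, into a crossing introduces exactly one new crossing between $p$ and $q$ and leaves every other pipe pair as in $\pi$, so reading off the permutation gives precisely $\pi\,t_{x,y}$; the inequality $\pi\,t_{x,y}\gtrdot\pi$ tested in that branch certifies the covering relation, with $y>x$ as required.

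The main obstacle is termination, because neither the row nor the column of the active corner is monotone across the swap branch. I would build the monovariant from the fixed pipe $p$. The blank branch strictly increases the row of the active r-tile (from $i$ to $i+a$ with $a\ge 1$), so rows being bounded by $n$ forces any run of consecutive blank-branch steps to be finite. The swap branch relocates the active turn to a crossing of $p$; the crux is to show this crossing is \emph{used up}, i.e.\ that the algorithm never again places the active turn on the crossing of $p$ with that same $q$. Granting this, the number of swaps is bounded by the number of pipes crossing $p$, and interleaving with the bounded blank-branch runs bounds the total number of calls by roughly $n$ times the number of crossings of $p$. To establish the key claim I would split into the two cases of Lemma \ref{lem:lowerr}, according to whether the bump lies before or after the crossing along $p$, and argue that each relocation moves the active turn strictly past the crossing in the direction of traversal of $p$, so that the statistic counting crossings of $p$ lying ahead of the active turn strictly decreases at every swap and never increases during a blank-branch droop. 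Verifying this strict decrease in both cases, against the ``closest-droop'' choice of $(a,b)$, is where I expect essentially all of the real work to lie.
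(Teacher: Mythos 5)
Your well-definedness and correctness-at-termination arguments are sound and essentially match the paper's (which cites Lemmas \ref{lem:droop} and \ref{lem:lowerr} for well-definedness and reads $x<y$ off the fact that $p$ occupies the j-shaped corner of the final bump). The gap is in termination: the statistic you propose, the number of crossings of $p$ lying ahead of the active turn along the traversal of $p$, is not monotone in either branch. Since pipes travel only north and east, ``later along $p$'' means weakly north and east. In the swap branch, Lemma \ref{lem:lowerr} has two cases, and in the case where the bump lies \emph{after} the existing cross (which occurs when $\pi(x)>\pi(y)$ and $x<y$), the active turn jumps \emph{backwards} along $p$, from the bump at $(i+a,j+b)$ to the old crossing site $(i',j')$, which $p$ visits earlier; the cross with $q$ relocates to $(i+a,j+b)$ and is now ahead of the active turn, as is every crossing of $p$ contributed by a pipe passing through the lens between the two meeting points. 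Your statistic strictly increases there. The blank branch also fails: the r-tile of $p$ on row $i+a$ can sit strictly west of column $j$ (when $(i+a,j)$ was a j-tile of $p$), and the crossings of $p$ on row $i+a$ between that r-tile and column $j$, which were behind the old active turn at $(i,j)$, are ahead of the new one. Finally, your higher-level claim that each pipe $q$ is swapped with at most once is asserted but not established, and it is not obvious.

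The paper's monovariant sidesteps all of this: it is the area under the pipe $p$ viewed as a curve in the $n\times n$ square. A droop replaces the northwest boundary of the relevant rectangle by its southeast boundary, and a cross--bump swap moves $p$ from the northwest side to the southeast side of the lens bounded by $p$ and $q$ between their two meeting points (this is precisely what Lemma \ref{lem:lowerr} records: $p$ ends up with the r-shaped, i.e.\ southeast, corner of the new bump). Both moves strictly decrease this area, and since every intermediate diagram is a BPD or almost-BPD of $\pi$, only finitely many values are attainable, so the recursion halts. To repair your argument you would need to replace your crossing count with a quantity of this kind, at which point the two-level structure collapses into the paper's single monovariant.
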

\begin{proof}
The well-definedness of the algorithm follows from Lemmas
\ref{lem:droop} and \ref{lem:lowerr}, as explained in the
construction. For termination,
observe that we modify the pipe $p$ 
either by droop moves or  cross-bump-swap moves. The area under the pipe $p$ (as a curve) 
in the $n\times n$ square
strictly decreases after each of these moves. Since the modification to the
diagram in each iteration of the function 
before the final modification at line 12 right before
it returns preserves
the property that the diagram is a
BPD or an almost BPD of $\pi$, there is a finite set of
possible areas under the pipe $p$. Therefore, the
algorithm must terminate, and by the terminating condition, 
$p$ must have bumped into $q$ after drooping,
therefore occupying the j-shaped corner at this bump.
This means $p<q$, so $x<y$. 
Therefore,
it produces a bumpless pipe dream of $\pi\,t_{x,y}$
for some $y>x$ such that $\pi\,t_{x,y}\gtrdot \pi$.
\end{proof}

Now, the algorithm has an opposite version
which inserts a blank tile at a position
where there is a j-tile, or resolves
a conflict where there is a bump tile, in
the opposite direction. 
\vskip 0.5em
{\small
\begin{algorithm}[H]

\SetKwProg{Fn}{}{ } {}\SetKwFunction{Fun}{insert\_blank\_or\_resolve\_bump\_at\_j}%
\Fn(){\Fun{D,i,j}}{
\KwData{$D\in \BPD'_{i,j}(\pi)$, where the j-shaped turn
in $D(i,j)$ belongs to pipe $p=\pi(x)$}
$a,b\gets 1,1$\;
\lWhile{$D(i-a,j)=\quo{+}$}{$a\gets a+1$}
\lWhile{$D(i,j-b)=\quo{+}$}{$b\gets b+1$}
Undroop $p$ into $(i-a, j-b)$\;
\uIf{$D(i-a,j-b)=\quo{\text{r}}$}
    {\uIf{$\forall j'>j-b$, $D(i-a,j')$ does not have a j-tile}
         {\Return{$D$}}
     $(i-a, j') \gets$ position of j-tile of $p$ on row $i-a$\;
     \Fun($D,i-a,j'$)}
    \uElse{Let $q=\pi(y)$ be the pipe that $p$ bumps  into at $(i-a,j-b)$\;
     \uIf{$\pi\,t_{y,x}\gtrdot \pi$}
         {$D(i-a,j-b)\gets\quo{+}$\;
          \Return{$D$} }
     \uElse{$(i',j')\gets$ position of existing cross of $p$ and $q$\;            
            $D(i',j') \gets$ bump tile\;
            $D(i+a,j+b)\gets\quo{+}$\;
            \Fun($D,i',j'$)}}

}
\caption{}
\label{alg:backward}
\end{algorithm} }

\begin{prop}
\label{prop:terminate2}
The algorithm \funfont{insert\_blank\_or\_resolve\_bump\_at\_j}
terminates and produces a either a bumpless pipe dream 
of $\pi\, t_{y,x}$ for some $y<x$, or a bumpless pipe dream
of $\pi$ with one fewer blank tile on row $x$, compared to the input. 
\end{prop}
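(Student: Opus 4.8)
The plan is to prove Proposition \ref{prop:terminate2} as the mirror image of Proposition \ref{prop:terminate}, systematically exchanging r-tiles with j-tiles, droops with undroops, and Lemmas \ref{lem:droop}, \ref{lem:lowerr} with their duals Lemmas \ref{lem:undroop}, \ref{lem:upperj}. First I would record well-definedness: every recursive call of \funfont{insert\_blank\_or\_resolve\_bump\_at\_j} is entered with a j-shaped turn of $p$ at its distinguished tile. Each undroop is legal by Lemma \ref{lem:undroop}, possibly creating a bump at the target $(i-a,j-b)$; when that bump is with a pipe $q=\pi(y)$ that $p$ already crosses, the swap the algorithm performs leaves, by Lemma \ref{lem:upperj}, the j-shaped corner of the new bump with $p$, so the ensuing recursive call again satisfies its precondition. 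In the complementary branch the algorithm passes the position of the unique remaining j-tile of $p$ on the current row, which is also a j-turn of $p$.

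For termination I would dualize the area monovariant of Proposition \ref{prop:terminate}. Each undroop and each cross--bump swap alters only the curve of $p$, and, being exactly the inverses of the moves in Algorithm \ref{alg:forward} (the undroop inverts the droop, and the swap governed by Lemma \ref{lem:upperj} inverts the one governed by Lemma \ref{lem:lowerr}), each strictly \emph{increases} the area under $p$ in the $n\times n$ grid. Since every intermediate diagram remains a BPD or an almost-BPD of $\pi$ up to the final modification, this area is bounded above, so the process halts after finitely many moves.

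It remains to identify the output, and at termination we are in one of two branches. In the first, $p=\pi(x)$ bumps a pipe $q=\pi(y)$ it does not cross, i.e.\ $\pi\,t_{y,x}\gtrdot\pi$; here $p$ holds the r-shaped corner of this bump, since it was just created by undrooping $p$ into it. Dually to the computation in Proposition \ref{prop:terminate} (where the pipe holding the j-corner is the smaller one), the pipe holding the r-corner is the larger one, so $\pi(x)>\pi(y)$, which together with $\pi\,t_{y,x}\gtrdot\pi$ forces $y<x$. Replacing the bump by a crossing is legal by $\pi\,t_{y,x}\gtrdot\pi$ (no double crossing) and returns a BPD of $\pi\,t_{y,x}$. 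In the second branch $p$ undroops into an r-tile having no j-tile of $p$ to its right, so $p$ runs straight out the east edge on that row; as $p=\pi(x)$ exits on row $x$, this is row $x$, and since no crossing was ever altered the output is a genuine (reduced, bumpless) BPD of $\pi$.

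The hard part will be the blank-tile bookkeeping in the second branch. I would track blanks along the chain of recursive calls: undrooping a j-tile of $p$ on a row frees one blank there, while the next undroop into a blank r-target consumes one blank on the following row, so consecutive contributions telescope. The distinguished tile of the initial call is necessarily a bump (a genuine BPD in $\BPD'_{i,j}(\pi)$ carries an r-turn, not a j-turn, at $(i,j)$), so the very first undroop vacates $p$'s corner into the standing corner of the other pipe rather than producing a fresh blank; the telescoping then collapses to a single net $-1$, landing on the terminal row $x$. Making this rigorous is where the real work lies: I must check that along this branch every undroop target is genuinely blank (so that we never divert into the first branch), that the returned diagram is reduced and bumpless, and that the terminal row is exactly the exit row $x$ of $p$. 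I also need the two dual geometric facts used above—that the r-corner of a non-crossing bump carries the larger label, and that the backward swap strictly increases the area under $p$—which are the remaining technical points to pin down.
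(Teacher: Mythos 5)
Your proposal is correct and follows essentially the same route as the paper: well-definedness from Lemmas \ref{lem:undroop} and \ref{lem:upperj}, termination by dualizing the area monovariant of Proposition \ref{prop:terminate}, and the same two-branch analysis at termination (the r-corner of a non-crossing bump forcing $p>q$ and hence $x>y$ in one branch, and the identification $i-a=x$ via $p$ exiting straight east in the other). Your telescoping account of the blank-tile count is in fact more explicit than the paper's one-line remark that the last undroop ``ate'' a blank on row $x$ without giving one back.
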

\begin{proof}
By Lemmas \ref{lem:undroop} and
\ref{lem:upperj}, this algorithm is
well-defined. By similar reasoning as in
Proposition \ref{prop:terminate}, this 
algorithm terminates. 

If it terminates
by triggering the condition on line 7, $p$ only turns 
once on row $i-a$, so this must also be the row
in which $p$ exits, which means $i-a=x$. This entire process
does not change the permutation, so the output is a bumpless
pipe dream of $\pi$. The last undrooping step ate a
blank tile on row $x$ and did not give it back, so this
bumpless pipe dream has one fewer blank tile on row $x$.

If the algorithm terminates by triggering the condition
on line 13, $p$ must have bumped into $q$ after undrooping,
therefore occupying the r-shaped corner at this bump.
This means $p>q$, and therefore $x>y$. 
\end{proof}

We are now ready to describe the bijection $\Phi_\pi$.

\noindent\emph{Proof of Theorem \ref{thm:monkbpd}}.\ \  Let $D\in \BPD(\pi)$, $p=\pi(\alpha)$.
Pipe $p$ exits on row $\alpha$. Let 
$(\alpha, j)$ be the position of the r-tile of $p$
on row $\alpha$. We run the function
\funfont{insert\_blank\_or\_resolve\_bump\_at\_r}
on $(D,\alpha,j)$. By Proposition \ref{prop:terminate},
the output is a bumpless pipe dream 
$D'\in \BPD(\pi\,t_{\alpha, l})$
for some $l>\alpha$ and $\pi\,t_{\alpha, l}\gtrdot \pi$. By construction of the algorithm,
the number of blank tiles on each row stays the
same, except for row $\alpha$ where $D'$ has one
more blank tile than $D$.

Let $D\in \BPD(\pi\,t_{k,\alpha})$ for some $k<\alpha$ such that
$\pi\, t_{k,\alpha}\gtrdot \pi$. Let $q=\pi\,t_{k,\alpha}(\alpha)$ and
$p=\pi\,t_{k,\alpha}(k)$ be pipes. Since $\pi \,t_{k,\alpha}\gtrdot \pi$, $p$ and $q$ must cross. Let $(i,j)$ be the position
of the tile where they cross. 
Notice that since $q<p$, the ``$|$'' segment in this cross
must belong to $p$. 
We now replace this ``$+$''
tile with a bump tile. Notice that by doing so we have uncrossed
$p$ and $q$, therefore creating an almost bumpless pipe dream,
$D'\in \BPD'_{i,j}(\pi)$. Also, now $p=\pi(\alpha)$, $q=\pi(k)$, and the r-shaped corner in this new bump tile
belongs to  pipe $p$.

We run the function
\funfont{insert\_blank\_or\_resolve\_bump\_at\_r}
on $(D',i,j)$. 
Again by Proposition \ref{prop:terminate},
the output is a bumpless pipe dream 
$D''\in \BPD(\pi\,t_{\alpha, l})$
for some $l>\alpha$ and $\pi\,t_{\alpha, l}\gtrdot \pi$. The number of blank tiles on each row remains
constant during this process.

To go the opposite direction, let 
$E\in \BPD(\pi\,t_{l,\alpha})$ for some $\alpha < l$ such that
$\pi\, t_{\alpha,l}\gtrdot \pi$.
Let $q=\pi\,t_{\alpha,l}(\alpha)$ and
$p=\pi\,t_{\alpha,l}(l)$ be pipes. Since $\pi \,t_{\alpha,l}\gtrdot \pi$, $p$ and $q$ must cross. Let $(i,j)$ be the position
of the tile where they cross. 
Notice that since $q>p$, the ``$|$'' segment in this cross
must belong to $q$. 
We now replace this ``$+$''
tile with a bump tile. Again this uncrosses $p$ and $q$,
creating an almost bumpless pipe dream
$E'\in \BPD'_{i,j}(\pi)$, and
making $p=\pi(\alpha)$ and $q=\pi(l)$. The j-shaped corner
in this new bump tile belongs to $p$. 

We run the function
\funfont{insert\_blank\_or\_resolve\_bump\_at\_j} 
on $(E', i,j)$. By Proposition \ref{prop:terminate2},
there are two possible outcomes. The output is either
some $E''\in \BPD(\pi\, t_{k,\alpha})$ for some
$k<\alpha$, in which case the number of blank tiles
on each row stays invariant,
or some $E''\in \BPD(\pi)$ that occupies one more blank
tile on row $\alpha$ as compared to $E'$. 

By the construction of the two algorithms, it is easy to see
that the processes described above are inverses of each other,
giving a bijection between the two sets of bumpless pipe dreams.\hfill \qedsymbol

\ssection{Monk's Rule for Double Schubert Polynomials}
The version of Monk's rule for double Schubert polynomials 
states that 
\begin{thm}[Monk's rule for double Schuberts]
Let $\pi\in S_n$, $1\le \alpha <n$,
such that there exists some $l>\alpha$ with
$\pi\,t_{\alpha,l}\gtrdot \pi$.
Then
\[\S_\alpha(\mathbf{x,-y}) \S_{\pi}(\mathbf{x,-y})=\sum_{\substack{k\le\alpha<l\\\pi\, t_{k,l}\gtrdot\pi}}\S_{\pi\,t_{k,l}}(\mathbf{x,-y}) +\sum_{i=1}^\alpha(y_{\pi(i)}-y_i)\S_\pi(\mathbf{x,-y}).\]
\end{thm}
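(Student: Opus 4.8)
The plan is to imitate the single-variable argument: deduce the rule from one difference identity, the double analogue of (\ref{singlemonk}), and prove that identity by a decorated refinement of $\Phi_\pi$. First I would reduce algebraically. Using $\S_\alpha(\mathbf{x,-y})=\sum_{i=1}^\alpha(x_i-y_i)$ and subtracting $\S_{\alpha-1}(\mathbf{x,-y})\S_\pi(\mathbf{x,-y})$ from the claimed identity, the first right-hand sum telescopes — only the pairs $(\alpha,l)$ with $l>\alpha$ and $(k,\alpha)$ with $k<\alpha$ survive, with opposite signs — while the correction collapses to the single term $(y_{\pi(\alpha)}-y_\alpha)\S_\pi$. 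After rearranging, this is equivalent to
\begin{equation}\label{doublemonk}
(x_\alpha-y_{\pi(\alpha)})\,\S_\pi(\mathbf{x,-y})+\sum_{\substack{k<\alpha\\ \pi\,t_{k,\alpha}\gtrdot\pi}}\S_{\pi\,t_{k,\alpha}}(\mathbf{x,-y})=\sum_{\substack{\alpha<l\\ \pi\,t_{\alpha,l}\gtrdot\pi}}\S_{\pi\,t_{\alpha,l}}(\mathbf{x,-y}),
\end{equation}
which is precisely (\ref{singlemonk}) with $x_\alpha$ replaced by $x_\alpha-y_{\pi(\alpha)}$. Conversely, writing (\ref{doublemonk}) for each $\beta\le\alpha$ in place of $\alpha$ and summing, the $\sum_{k<\beta}$ terms cancel the $l\le\alpha$ parts of the right-hand sides and one recovers the full rule, the correction $\sum_{i=1}^\alpha(y_{\pi(i)}-y_i)$ being exactly the accumulated gap between the $-y_\alpha$ coming from $\S_\alpha-\S_{\alpha-1}$ and the $-y_{\pi(\alpha)}$ appearing in (\ref{doublemonk}). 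So it suffices to realize (\ref{doublemonk}) by a weight-preserving bijection.

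Next I would introduce decorated bumpless pipe dreams: label each blank tile by $x$ or $y$, a blank at $(i,j)$ contributing $x_i$ when labeled $x$ and $-y_j$ when labeled $y$, so that summing over the labelings of a fixed $D$ restores $\prod_{(i,j)\in blank(D)}(x_i-y_j)$ and decorated BPDs of $\pi$ generate $\S_\pi(\mathbf{x,-y})$. The distinguished factor $x_\alpha-y_{\pi(\alpha)}$ is itself the double weight of a single blank at $(\alpha,\pi(\alpha))$, so the left term $(x_\alpha-y_{\pi(\alpha)})\S_\pi$ is generated by decorated BPDs of $\pi$ carrying one extra labeled blank at $(\alpha,\pi(\alpha))$. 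The backbone of the bijection is $\Phi_\pi$ from Theorem \ref{thm:monkbpd}, used verbatim on pipes; the whole content is to transport the labels so that the decorated weight is preserved.

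The reason this needs work — and the reason the decorations are necessary — is that the double weight sees columns, whereas $\Phi_\pi$ moves them. Tracing the droop chain of Algorithm \ref{alg:forward}, $\Phi_\pi$ fixes the \emph{row} of every surviving blank but, across consecutive droops, relocates a blank within each intermediate row to a new column; moreover the single blank it creates on row $\alpha$ sits at the r-tile column of pipe $\pi(\alpha)$, which need not equal $\pi(\alpha)$. An $x$-labeled blank is column-free, so it transports along its row at no cost; a $y$-labeled blank is the obstruction, since after a column change no label on the relocated tile reproduces its factor $-y_j$. The plan is therefore to make the droop, undroop, and cross--bump--swap steps label-aware and to prove, move by move, a decorated weight-invariance lemma refining Lemmas \ref{lem:droop}--\ref{lem:upperj}: $y$-labels are to be passed along the droop chain so that over the whole run the net column carrying the distinguished $y$-label is $\pi(\alpha)$ rather than the r-tile column at which the blank is physically created, while the multiset of columns of the remaining $y$-labels, and the rows of all $x$-labels, are conserved. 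The branch beginning from $\BPD(\pi\,t_{k,\alpha})$ (which first uncrosses $p$ and $q$) and the inverse built on Algorithm \ref{alg:backward} would be handled by the mirror-image rule, exactly as in Theorem \ref{thm:monkbpd}.

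I expect the main obstacle to be precisely this $y$-label bookkeeping: showing that the label-aware elementary moves are simultaneously weight-preserving, bijective, and compatible with the underlying $\Phi_\pi$, so that no $y$-contribution is created or destroyed along a droop chain and the distinguished blank alone accounts for the column $\pi(\alpha)$. This is the only genuinely new ingredient beyond Theorem \ref{thm:monkbpd}, and it is where the binary decorations are indispensable, since the bare map $\Phi_\pi$ is \emph{not} double-weight-preserving. Granting the invariance of each decorated move, summing (\ref{doublemonk}) and invoking the equivalence of the first paragraph completes the proof of the double Monk's rule.
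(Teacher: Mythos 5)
Your algebraic reduction to the difference identity (\ref{doublemonk}) and your introduction of decorated bumpless pipe dreams both match the paper exactly. The gap is at the heart of the argument: you propose to keep the underlying map $\Phi_\pi$ of Theorem \ref{thm:monkbpd} ``verbatim on pipes'' and only transport the $\mathsf{x}/\mathsf{-y}$ labels. This cannot work, and not merely because the bookkeeping is delicate. If the underlying diagram $D'=\Phi_\pi(D)$ is fixed independently of the decoration, then the $2^{|blank(D)|+1}$ labelled inputs built from $D$ must biject weight-preservingly onto the $2^{|blank(D')|}$ decorations of $D'$; summing monomial weights over both sides forces $\prod_{(i,j)\in blank(D')}(x_i-y_j)=(x_\alpha-y_{\pi(\alpha)})\prod_{(i,j)\in blank(D)}(x_i-y_j)$, and by unique factorization this means $blank(D')$ must equal $blank(D)$ together with one new blank at exactly $(\alpha,\pi(\alpha))$. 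But, as you yourself observe, $\Phi_\pi$ relocates a blank within each intermediate row of the droop chain to a new column, so this fails whenever the chain has more than one step; no relabelling scheme can repair it, and there need not even be a blank of $D'$ in column $\pi(\alpha)$ to carry your ``distinguished $y$-label.'' You correctly flag the $\mathsf{-y}$ bookkeeping as ``the main obstacle'' and ``the only genuinely new ingredient,'' but that ingredient is precisely what the proof must supply, and the framework you commit to excludes every possible solution.

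What the paper actually does is let the decoration steer the algorithm, so that differently decorated copies of the same $D$ land on \emph{different} underlying diagrams. In Algorithm \ref{alg:forward}, when a droop consumes a blank at $(i+a,j+b)$ one records its label $v$: if $v=\mathsf{x}$ the recursion continues at the r-tile of $p$ on row $i+a$ as before (the blank created there restores the factor $x_{i+a}$), but if $v=\mathsf{-y}$ it continues instead at the r-tile of $p$ on \emph{column} $j+b$, so the next blank created lies in the same column and restores the factor $-y_{j+b}$. The blank created at the starting r-tile $(i,j)$ receives the input label $u$ and contributes $x_i$ or $-y_j$, which supplies the extra factor $x_\alpha$ or $-y_{\pi(\alpha)}$; the backward algorithm is modified symmetrically and acquires a second termination condition (the pipe turning only once on a column, which pins that column to $\pi(x)$). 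Without this row-versus-column branching---or some equivalent device making the output diagram depend on the labels---the weight preservation required by the theorem is unattainable, so the proposal as written does not close.
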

Computing $\S_\alpha(\mathbf{x,-y})\S_\pi(\mathbf{x,-y})-\S_{\alpha-1}(\mathbf{x,-y})\S_\pi(\mathbf{x,-y})$ and
rearranging terms, we find 
\begin{equation}
\label{doublemonk}
    (x_\alpha-y_{\pi(\alpha)})\S_\pi (\mathbf{x,-y})+ \sum_{\substack{k<\alpha\\\pi\,t_{k,\alpha}\gtrdot \pi}}\S_{\pi\,t_{k,\alpha}}(\mathbf{x,-y}) =\sum_{\substack{\alpha<l\\\pi\,t_{\alpha,l}\gtrdot \pi}} \S_{\pi\,t_{\alpha,l}}(\mathbf{x,-y}).
\end{equation}
We give a bijective proof of formula (\ref{doublemonk})
in this section.

We will first need to introduce 
decorations on blank tiles of bumpless pipe dreams. 
A \textbf{decorated bumpless pipe dream}
of $\pi$ is a bumpless
pipe dream together with a decoration on the blank
tiles, where each blank tile must be decorated
with either an $\mathsf{x}$ or a $\mathsf{-y}$ label. 
Let $ \widetilde{\BPD}(\pi)$ be the set of
decorated bumpless pipe dreams of $\pi$.
In other words, \[\widetilde{\BPD}(\pi)=\{(D,f):D\in \BPD(\pi), f:blank(D)\to\{\mathsf{x,-y}\}\}.\]
Note that $| \widetilde{\BPD}(\pi)|=|\BPD(\pi)|\times 2^{|blank(D)|}$ for any $D\in\BPD(\pi)$. 
Expand the double Schubert polynomial as 
a sum of monomials, we get the following
expression
\[\S_\pi(\mathbf{x,-y})=\sum_{(D,f)\in\widetilde{\BPD}(\pi)}\mon(D,f),\]
where \[ \mon(D,f)=\prod_{\substack{(i,j)\in blank(D)\\f(i,j)=\mathsf{x}}} x_i\prod_{\substack{(i,j)\in blank(D)\\f(i,j)=\mathsf{-y}}} (-y_j).\]
Similarly, we define 
\[\widetilde{\BPD}_{i,j}'(\pi):=\{(D,f):
D\in\BPD'_{i,j}(\pi),f:blank(D)\to\{\mathsf{x,-y}\}\}.\]

The combinatorial version 
of formula (\ref{doublemonk}) is
stated as follows. 
\begin{thm}
Let $\pi\in S_n$, $1\le \alpha <n$,
such that there exists some $l>\alpha$ with
$\pi\,t_{\alpha,l}\gtrdot \pi$.
Then there exists a bijection
 \[\widetilde{\Phi}_\pi:(\{\mathsf{x,-y}\}\times\widetilde{\BPD}(\pi))\sqcup\coprod_{\substack{k<\alpha\\ \pi\,t_{k,\alpha}\gtrdot \pi}}\widetilde{\BPD}(\pi\,t_{k,\alpha}) \longrightarrow
\coprod_{\substack{\alpha<l\\ \pi\,t_{l,\alpha}\gtrdot \pi}}\widetilde{\BPD}(\pi\,t_{l,\alpha}),\]
such that for any 
$(D,f)\in \widetilde{\BPD}(\pi))$,
\[\mon(\widetilde{\Phi}_\pi(\mathsf{x}, D,f)) = x_\alpha\mon(D,f),\] 
\[\mon(\widetilde{\Phi}_\pi(\mathsf{-y}, D,f)) = -y_{\pi(\alpha)}\mon(D,f),\]
and for any $k<\alpha, \pi\,t_{k,\alpha}\gtrdot \pi$,
$(D,f)\in \widetilde{\BPD}(\pi\,t_{k,\alpha})$,
\[\mon(\widetilde{\Phi}_\pi( D,f)) = \mon(D,f).\] 
\end{thm}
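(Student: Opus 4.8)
The plan is to build $\widetilde{\Phi}_\pi$ as a decoration-aware refinement of the bijection $\Phi_\pi$ from Theorem~\ref{thm:monkbpd}, running the same droop and cross-bump-swap moves of Algorithms~\ref{alg:forward} and \ref{alg:backward} while transporting the $\{\mathsf{x},\mathsf{-y}\}$ labels. The first thing I would do is recast the three weight conditions into a profile statement that isolates what the decoration must accomplish. If $r_i(D,f)$ denotes the number of $\mathsf{x}$-labeled blanks in row $i$ and $c_j(D,f)$ the number of $\mathsf{-y}$-labeled blanks in column $j$, then
\[ \mon(D,f)=\prod_i x_i^{\,r_i(D,f)}\prod_j (-y_j)^{\,c_j(D,f)}. \]
So $\widetilde{\Phi}_\pi$ must preserve the entire $\mathsf{x}$-row profile $(r_i)$ and the entire $\mathsf{-y}$-column profile $(c_j)$, except that the $\mathsf{x}$-branch must raise $r_\alpha$ by one, the $\mathsf{-y}$-branch must raise $c_{\pi(\alpha)}$ by one, and the $k<\alpha$ inputs must leave both profiles unchanged. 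This is the statement I would actually prove.

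The $\mathsf{x}$-part should follow the single-variable proof almost verbatim, because it is entirely row-controlled. Recall that $\Phi_\pi$ preserves the blank count of every row except row $\alpha$; accounting for the cascade row by row, each relocated blank can be paired with a blank in the same row, leaving exactly one genuinely new blank on row $\alpha$. Hence for an input $(\mathsf{x},D,f)$ I would run \funfont{insert\_blank\_or\_resolve\_bump\_at\_r} as in Theorem~\ref{thm:monkbpd}, assign the label $\mathsf{x}$ to the new blank created on row $\alpha$, and give each paired blank the label of its partner. Since an $\mathsf{x}$-weight depends only on the row index, this preserves $(r_i)$ off row $\alpha$ and contributes the single extra factor $x_\alpha$; the same transport keeps $(r_i)$ fixed for the $k<\alpha$ inputs.

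The genuine difficulty, and the step I expect to be the main obstacle, is the $\mathsf{-y}$ labels: their weight is column-controlled, whereas the cascade, though it preserves the blank count of each row, pushes the blanks it relocates westward into new columns. Transporting a $\mathsf{-y}$-label along its row would therefore move it off its column and spoil the factor $-y_j$. The modification I would make is to treat a $\mathsf{-y}$-decorated blank as \emph{frozen to its column}: the droops are redirected so that they relocate only $\mathsf{x}$-blanks, stepping over the frozen $\mathsf{-y}$-blanks, so that the column profile $(c_j)$ is left untouched. To realize the extra factor $-y_{\pi(\alpha)}$ demanded of the $\mathsf{-y}$-branch I would instead run the column-reflected insertion, which deposits its new blank at the foot of pipe $\pi(\alpha)$, that is in column $\pi(\alpha)$, and then label it $\mathsf{-y}$. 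The bulk of the work lies in verifying that this freezing is geometrically compatible with the droop and cross-bump-swap moves (Lemmas~\ref{lem:droop}--\ref{lem:upperj}), and that the reflected insertion lands precisely on column $\pi(\alpha)$.

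Finally I would establish invertibility and glue the branches together. The backward procedure \funfont{insert\_blank\_or\_resolve\_bump\_at\_j}, run with the same freezing rule, should supply the inverse exactly as Proposition~\ref{prop:terminate2} inverts Proposition~\ref{prop:terminate} in the undecorated setting; its two termination alternatives separate the image of the $\pi$-inputs from that of the $k<\alpha$-inputs, while the label of the distinguished inserted blank separates the $\mathsf{x}$- from the $\mathsf{-y}$-branch. Since each step of the label rule is reversible and the weight conditions were checked branch by branch above, the decorated maps assemble into a single bijection $\widetilde{\Phi}_\pi$ with the required properties, mirroring the conclusion of the proof of Theorem~\ref{thm:monkbpd}.
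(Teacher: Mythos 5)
You correctly isolate the crux --- that $\mathsf{-y}$-weights are column-indexed while the cascade of Theorem~\ref{thm:monkbpd} restores blank counts row by row --- and you also correctly see that the $\mathsf{-y}$-branch must start at the elbow of pipe $\pi(\alpha)$ in column $\pi(\alpha)$ rather than in row $\alpha$. But your mechanism for protecting $\mathsf{-y}$-labels mid-cascade does not work. A droop has no freedom to ``step over'' a frozen blank: by Lemma~\ref{lem:droop} the target $(i+a,j+b)$ is forced to be the nearest non-``$+$'' tile to the southeast, and if that tile is blank the droop necessarily converts it into a j-tile. Enlarging the rectangle to skip it would put non-cross tiles in its interior, create and destroy additional blanks along the rectangle's boundary, change which bump (hence which $\pi\,t_{\alpha,l}$) the algorithm terminates at, and break the inverse map. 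So $\mathsf{-y}$-labelled blanks will inevitably be consumed, and your row-transport of labels would then move a factor $-y_{j+b}$ into a different column, spoiling $\mon$.

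The paper's fix is different, and it is the one idea your proposal is missing: let the cascade consume whatever blank it lands on, but use that blank's label to steer the \emph{next} step. If the consumed blank at $(i+a,j+b)$ was labelled $\mathsf{x}$, continue (as in the undecorated algorithm) from the r-tile of $p$ on row $i+a$, so the replacement blank lands in row $i+a$; if it was labelled $\mathsf{-y}$, continue instead from the r-tile of $p$ on \emph{column} $j+b$, so the replacement blank lands in column $j+b$ and inherits the label $\mathsf{-y}$. A single run of \funfont{insert\_blank\_or\_resolve\_bump\_at\_r} therefore interleaves row-steps and column-steps according to the labels it meets, rather than splitting into a purely ``row-controlled'' $\mathsf{x}$-part and a ``column-reflected'' $\mathsf{-y}$-part; each step preserves $\mon$ exactly, and the backward algorithm acquires the symmetric modification (with the additional termination case detecting $j-b=\pi(x)$) so that invertibility goes through as in Propositions~\ref{prop:terminate} and~\ref{prop:terminate2}. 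Without this redirection rule your construction has no way to preserve the column profile of the $\mathsf{-y}$-labels.
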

\begin{proof}

We will modify the algorithms given in the previous section
slightly to get the bijection
$\widetilde{\Phi}_\pi$.
The algorithms in both directions will now
take as input $(D,f)\in \widetilde{\BPD}'_{i,j}(\pi)$, \emph{as
well as a label $u\in\{\mathsf{x,-y}\}$ in the
case when
$D(i,j)$ is an r-tile in the
forward direction, and when 
$D(i,j)$ is a j-tile 
in the backward direction}. The outputs will also be 
decorated bumpless pipe dreams,
as well as a label $v\in\{\mathsf{x,-y}\}$ in 
certain cases.

In Algorithm \ref{alg:forward}, 
before the droop of $p$ on line 5,
if $D(i+a,j+b)$ is a 
blank tile (in which case the
condition on line 6 will be true), 
we remember its label $v$.
If the input $D(i,j)=\quo{\text{r}}$,
after the droop on line 5, $D(i,j)$ will become
a blank tile. We decorate it with the label
specified by input. 
Now instead of always choosing
the position of the r-tile of $p$ on row $i+a$,
we check the label $v$. If $v=\mathsf{x}$, 
we choose  the position of the r-tile of $p$ on
row $i+a$ as before, but if $v=\mathsf{-y}$,
we choose the position $(i',j+b)$ of the r-tile of
$p$ on column $j+b$. Note that this
construction guarantees that if the input
is $(D,f)$ where $D(i,j)=\text{``r''}$ and label $u$, and the output is $(D',f')$,
then $x_i\mon(D,f)=\mon(D',f')$ if $u=\mathsf{x}$,
and  $-y_j\mon(D,f)=\mon(D',f')$ if $u=\mathsf{-y}$.
The pseudocode for this modification is the following snippet,
and we replace lines 5--8 in Algorithm \ref{alg:forward} with it.
\vskip 0.15em
{\small
\begin{algorithm}[H]
\SetKwProg{Fn}{}{ } {}\SetKwFunction{Fun}{insert\_blank\_or\_resolve\_bump\_at\_r}%
\uIf{$D(i+a,j+b)$ is blank}{$v\gets f(i+a,j+b)$ }
Droop $p$ into $(i+a,j+b)$\;
\lIf{$D(i,j)$ is blank}{$f(i,j)\gets u$}
\uIf{$D(i+a,j+b)=\quo{\text{j}}$}
    {\uIf{$v=\mathsf{x}$}
         {$(i+a, j') \gets$ position of r-tile of $p$ on row $i+a$\;
         \Fun{$D,f,i+a, j', v$}}
     \uElse{$(i', j+b) \gets$ position of r-tile of $p$ on column $j+b$\;
         \Fun{$D,f,i', j+b, v$}}
    }
\end{algorithm}
}
\noindent (To be  pedantic, on line 20 of Algorithm \ref{alg:forward} we also need to pass the decoration as an argument, and the decoration must also
be returned.)

Similarly, we replace lines 5--10 in Algorithm
\ref{alg:backward} with the following snippet.
\vskip 0.15em
{\small
\begin{algorithm}[H]
\SetKwProg{Fn}{}{ } {}\SetKwFunction{Fun}{insert\_blank\_or\_resolve\_bump\_at\_j}%
\uIf{$D(i-a,j-b)$ is blank}{$v\gets f(i-a,j-b)$ }
Undroop $p$ into $(i-a,j-b)$\;
\lIf{$D(i,j)$ is blank}{$f(i,j)\gets u$}
\uIf{$D(i-a,j-b)=\quo{\text{r}}$}
    {\uIf{$v=\mathsf{x}$}
         {\uIf{$\forall j'>j-b$, $D(i-a,j')$ does not have a j-tile}{\Return{$D,f,v$}}
          $(i-a, j') \gets$ position of r-tile of $p$ on row $i-a$\;
         \Fun{$D,f,i-a, j', v$}}
     \uElse{
           {\uIf{$\forall i'>i-a$, $D(i',j-b)$ does not have a j-tile}{\Return{$D,f,v$}}
           $(i', j+b) \gets$ position of r-tile of $p$ on column $j+b$\;
           \Fun{$D,f,i', j-b, v$}}}
    }
\end{algorithm}}
Note that if the condition on
line 12 in the snippet above
is triggered, then
$p$ only turns once on
column $j-b$, which means that
$j-b=\pi(x)$. The rest of the
analysis for the modified 
algorithm is the same as before.
\end{proof}
Figure \ref{fig:examplexy} shows
an example for the same 
bumpless pipe dream as Figure
\ref{fig:example}, but with
labelled tiles.

\begin{figure}[H]
    \centering
    \begin{tikzpicture}
    \node[anchor=south west,inner sep=0] (image) at (-0.5,0)  
     {\includegraphics[trim=45 1350 500 45,clip, scale=0.1]{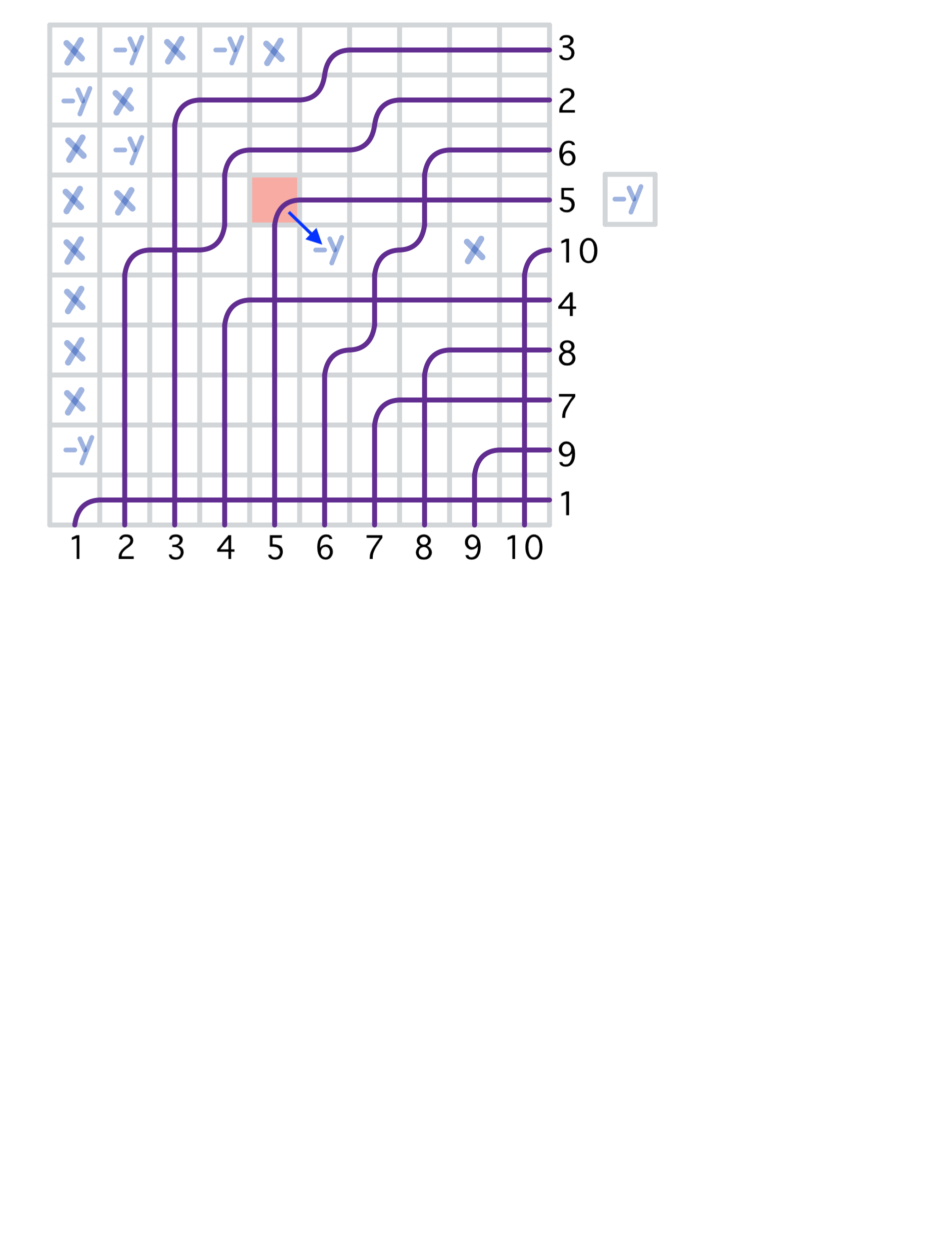}};
    \draw[black, thick, ->] (4.2,2) -- (4.8,2);
    \node[anchor=south west,inner sep=0] (image) at (5,0)  
     {\includegraphics[trim=45 1350 500 45,clip, scale=0.1]{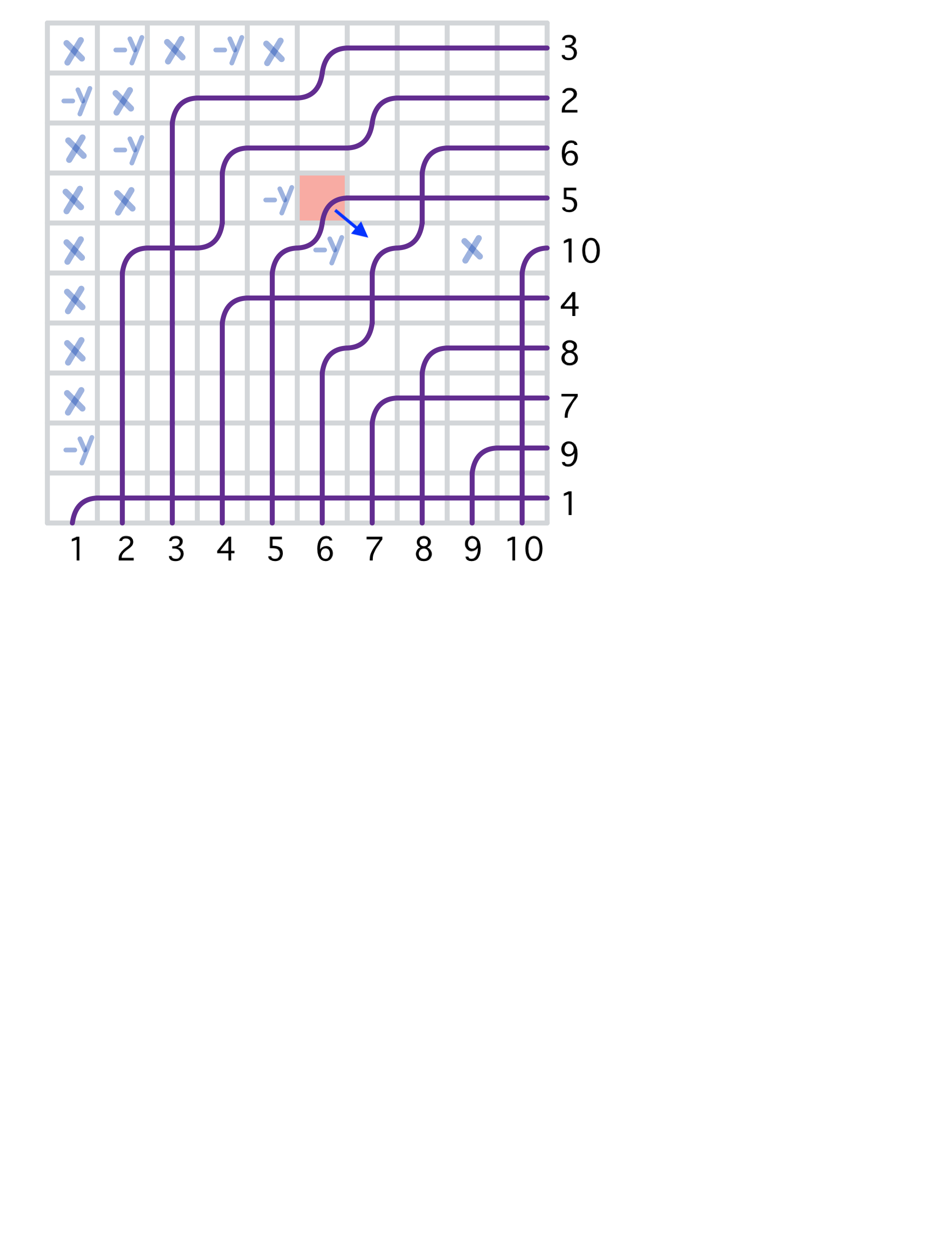}};
    \draw[black, thick, ->] (9.8,2) -- (10.3,2);
    \node[anchor=south west,inner sep=0] (image) at (10.5,0)
     {\includegraphics[trim=45 1350 500 45,clip, scale=0.1]{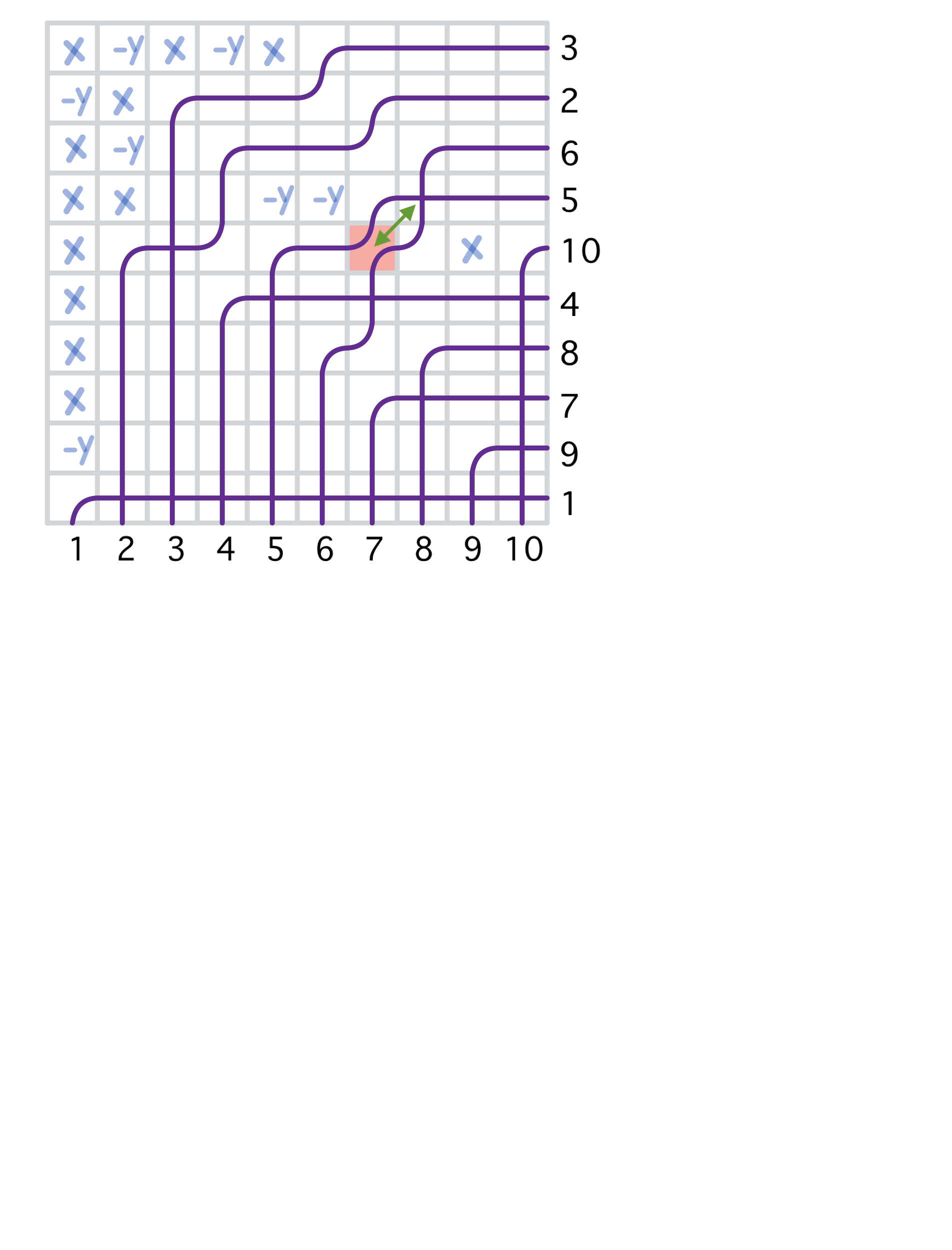}};
     
    \draw[black, thick, ->] (-1.2,-3) -- (-0.7,-3);
    \node[anchor=south west,inner sep=0] (image) at (-0.5,-5)
     {\includegraphics[trim=45 1350 500 45,clip, scale=0.1]{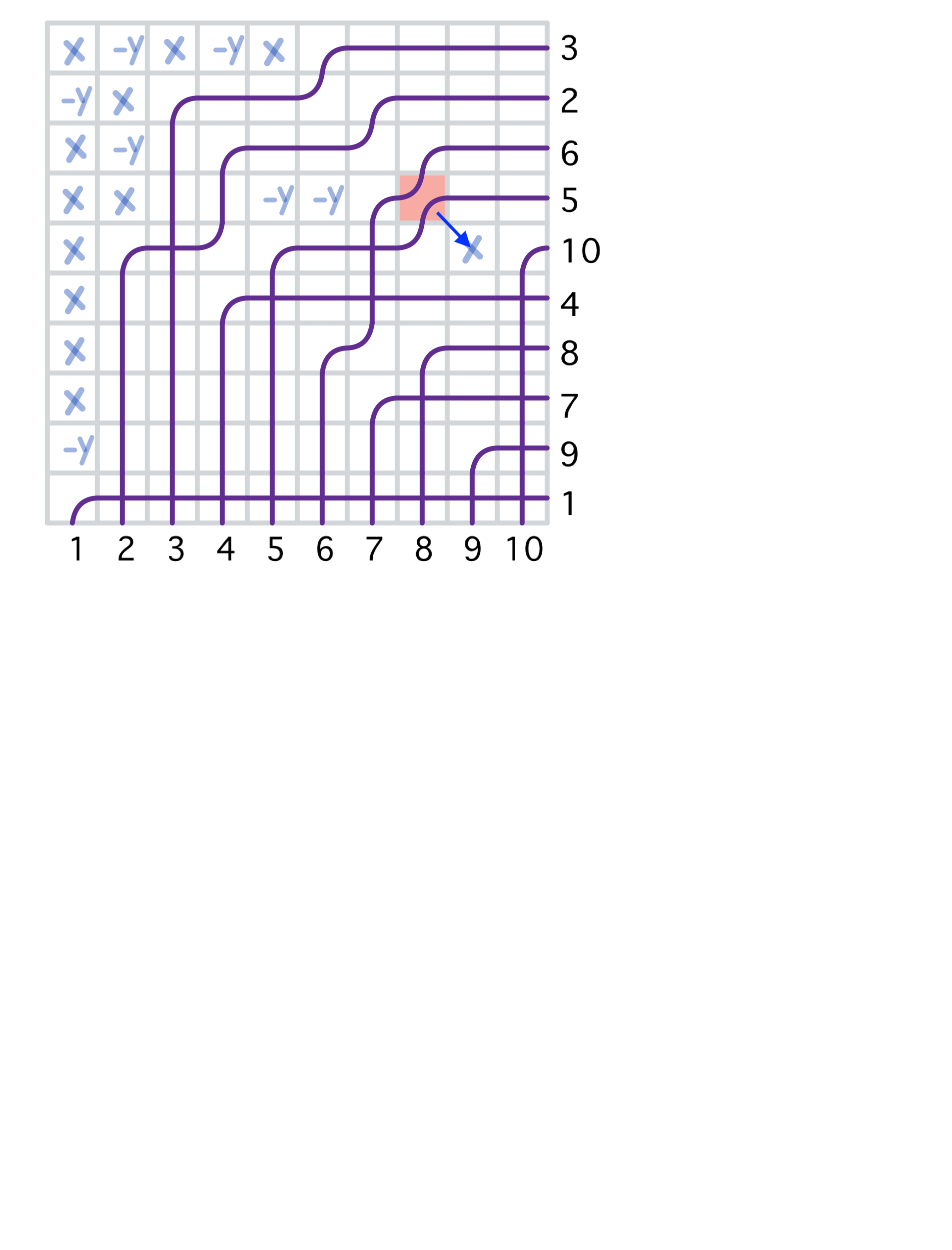}};
    \draw[black, thick, ->] (4.2,-3) -- (4.8,-3);
    \node[anchor=south west,inner sep=0] (image) at (5,-5)
     {\includegraphics[trim=45 1350 500 45,clip, scale=0.1]{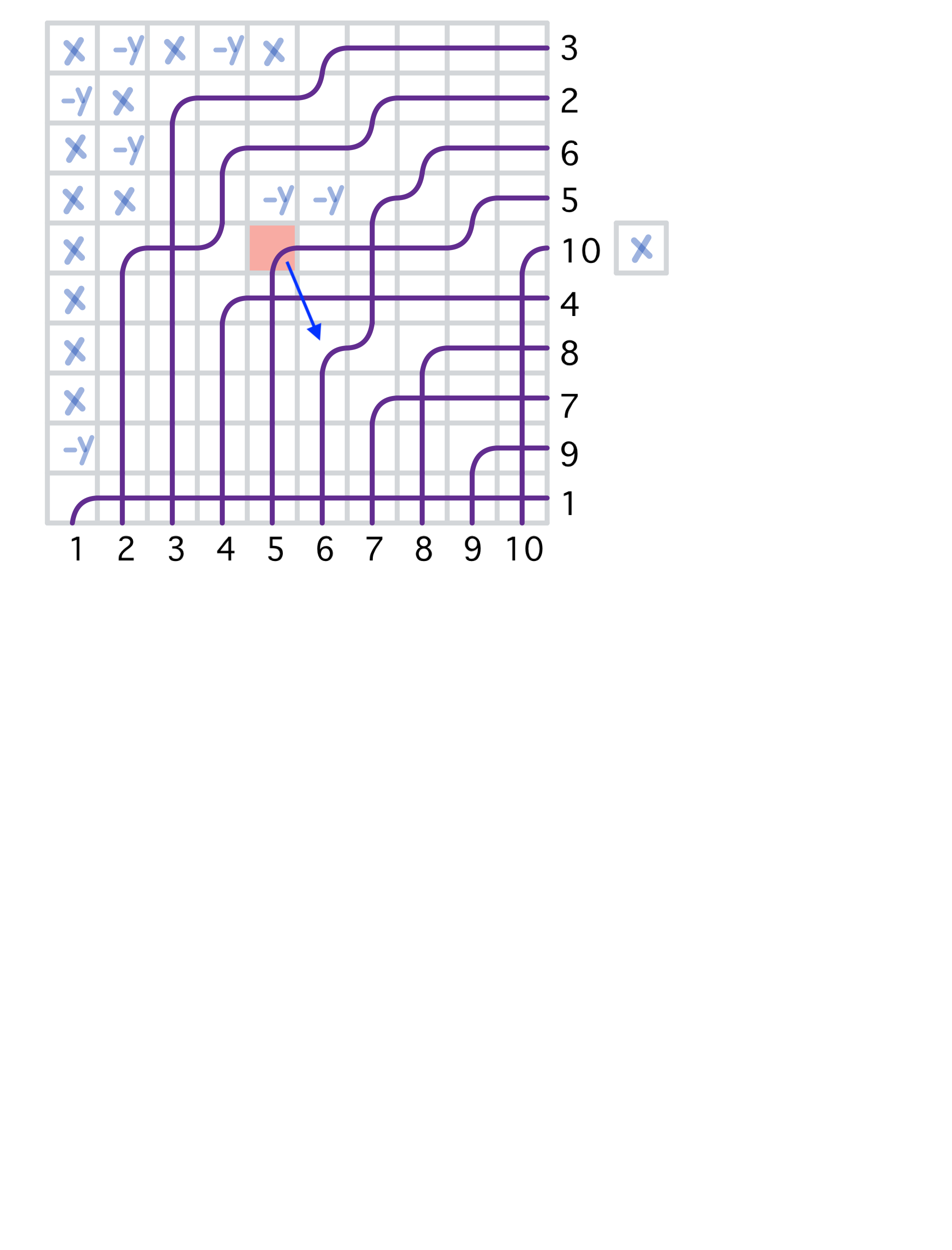}};
    \draw[black, thick, ->] (9.8,-3) -- (10.3,-3);
    \node[anchor=south west,inner sep=0] (image) at (10.5,-5)
     {\includegraphics[trim=45 1350 500 45,clip, scale=0.1]{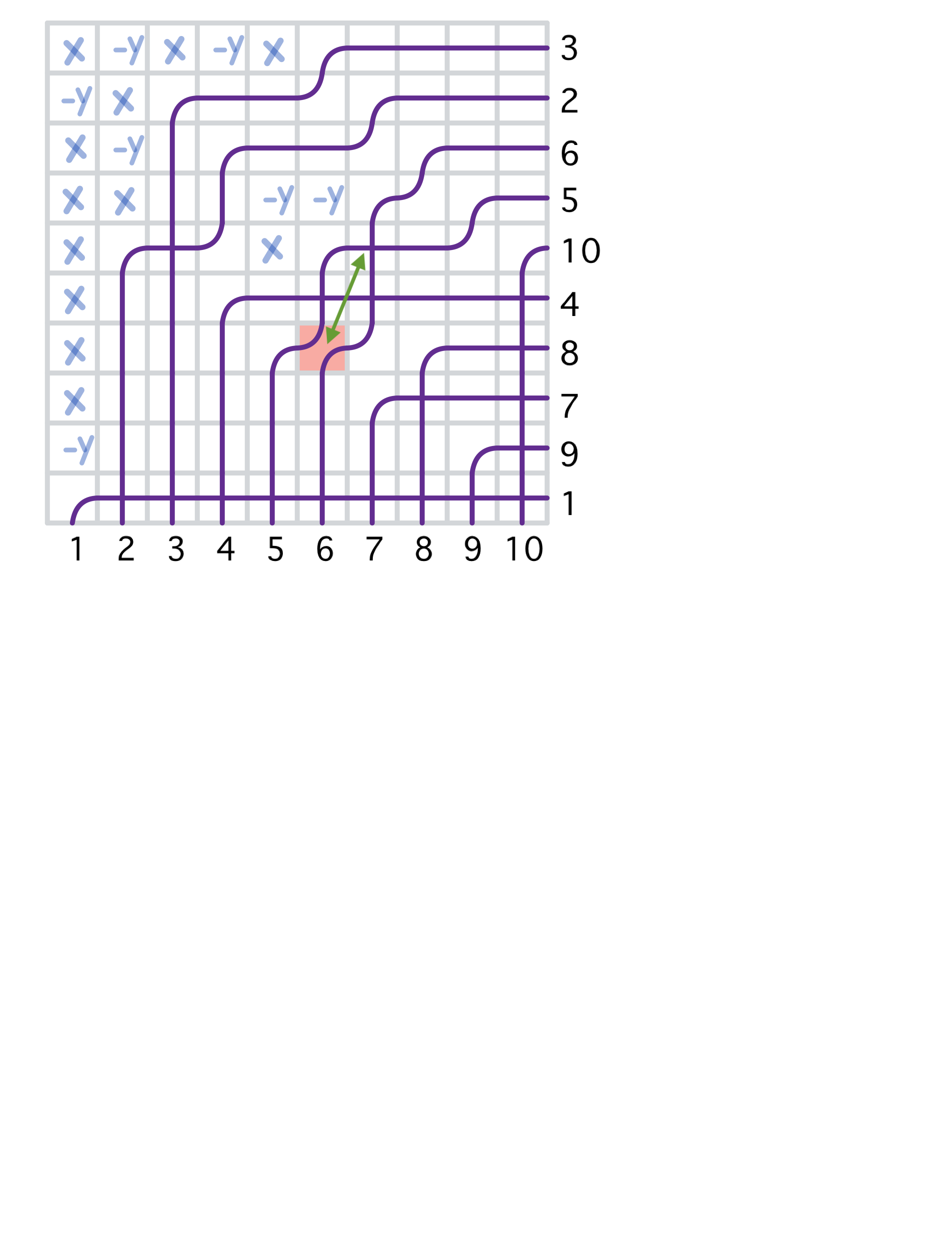}};
     
    \draw[black, thick, ->] (-1.2,-8) -- (-0.7,-8);
    \node[anchor=south west,inner sep=0] (image) at (0,-10)
     {\includegraphics[trim=45 1350 500 45,clip, scale=0.1]{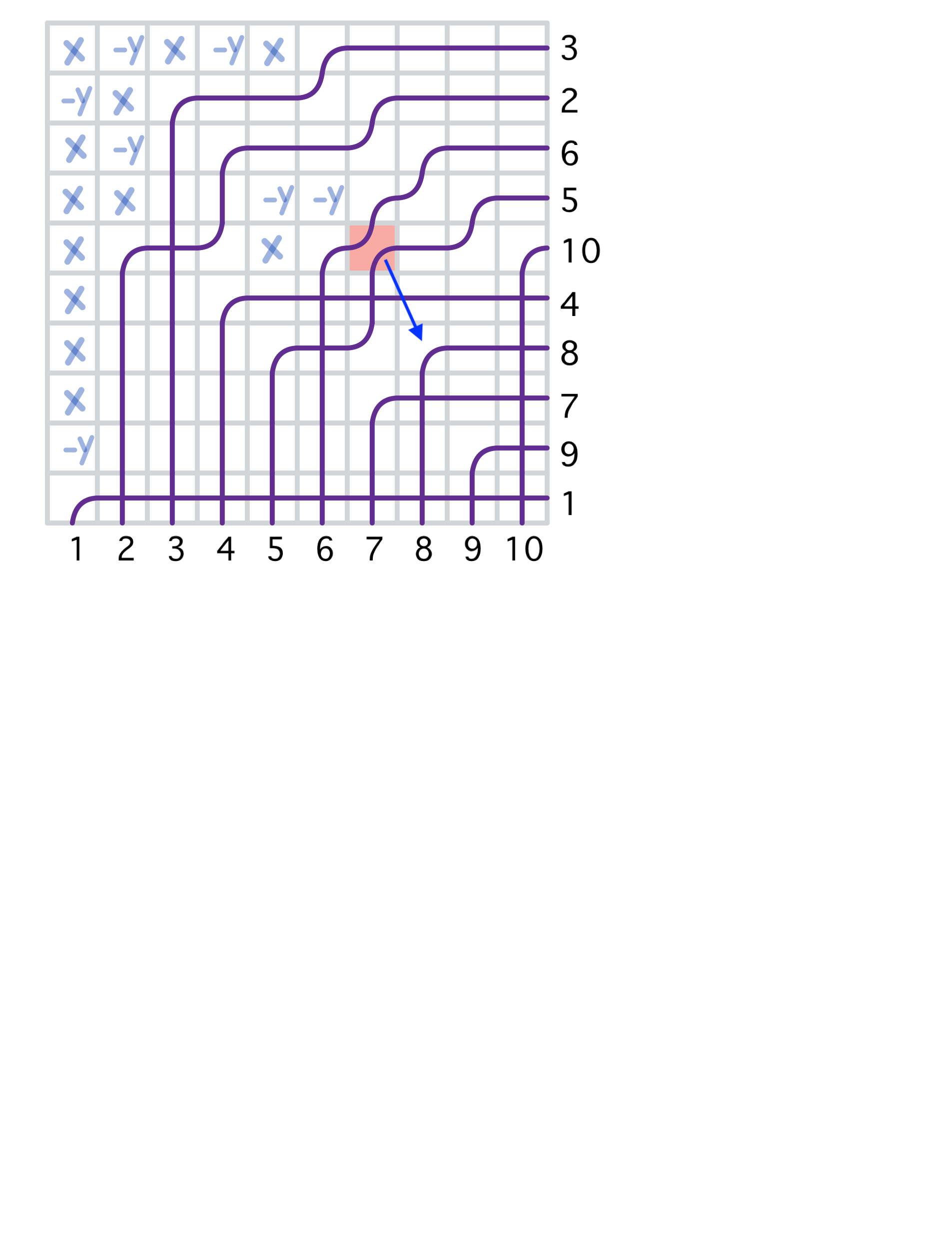}};
    \draw[black, thick, ->] (4.2,-8) -- (4.8,-8);
    \node[anchor=south west,inner sep=0] (image) at (5,-10)
     {\includegraphics[trim=45 1350 500 45,clip, scale=0.1]{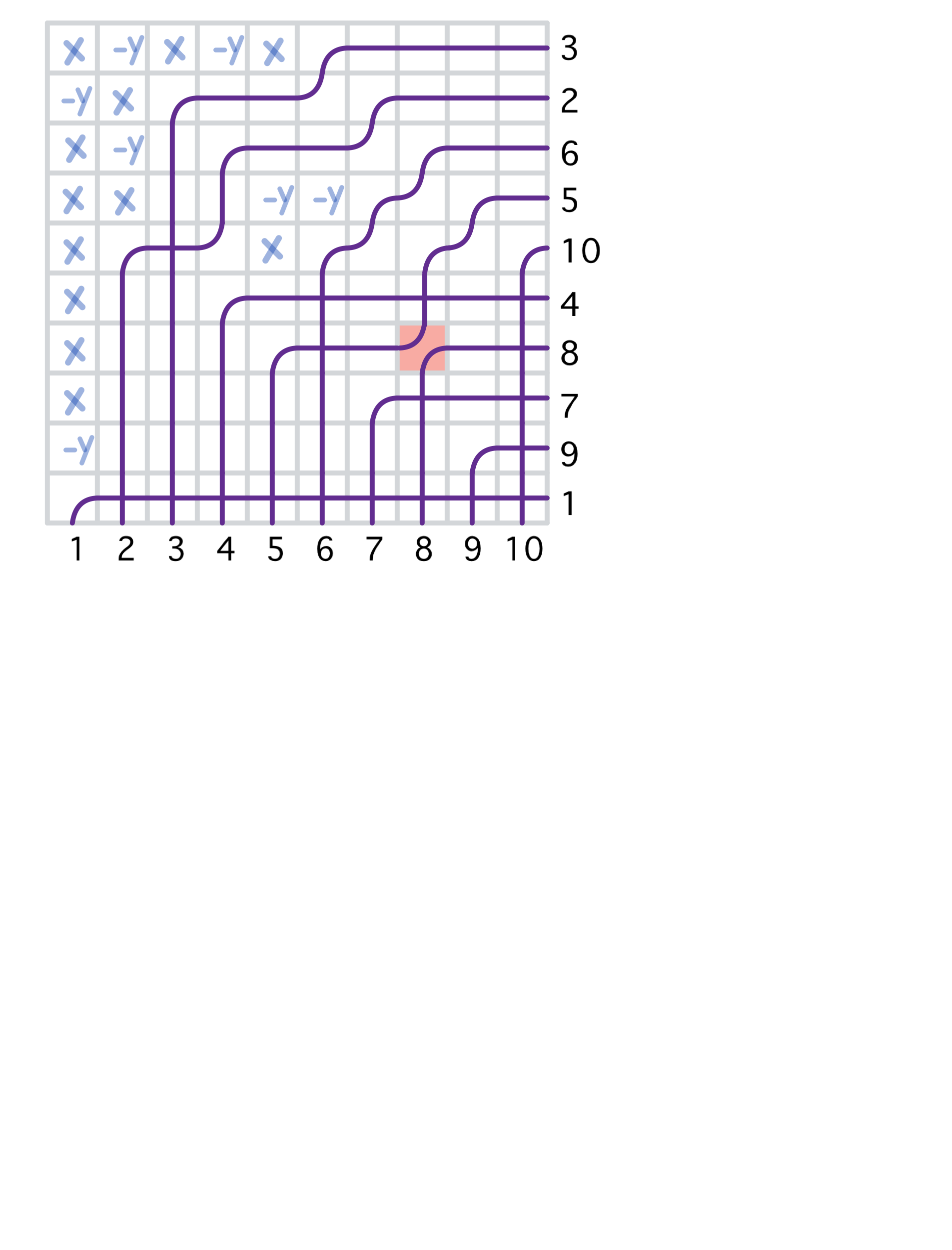}};
    \draw[black, thick, ->] (9.8,-8) -- (10.3,-8);
    \node[anchor=south west,inner sep=0] (image) at (10.5,-10)
     {\includegraphics[trim=45 1350 500 45,clip, scale=0.1]{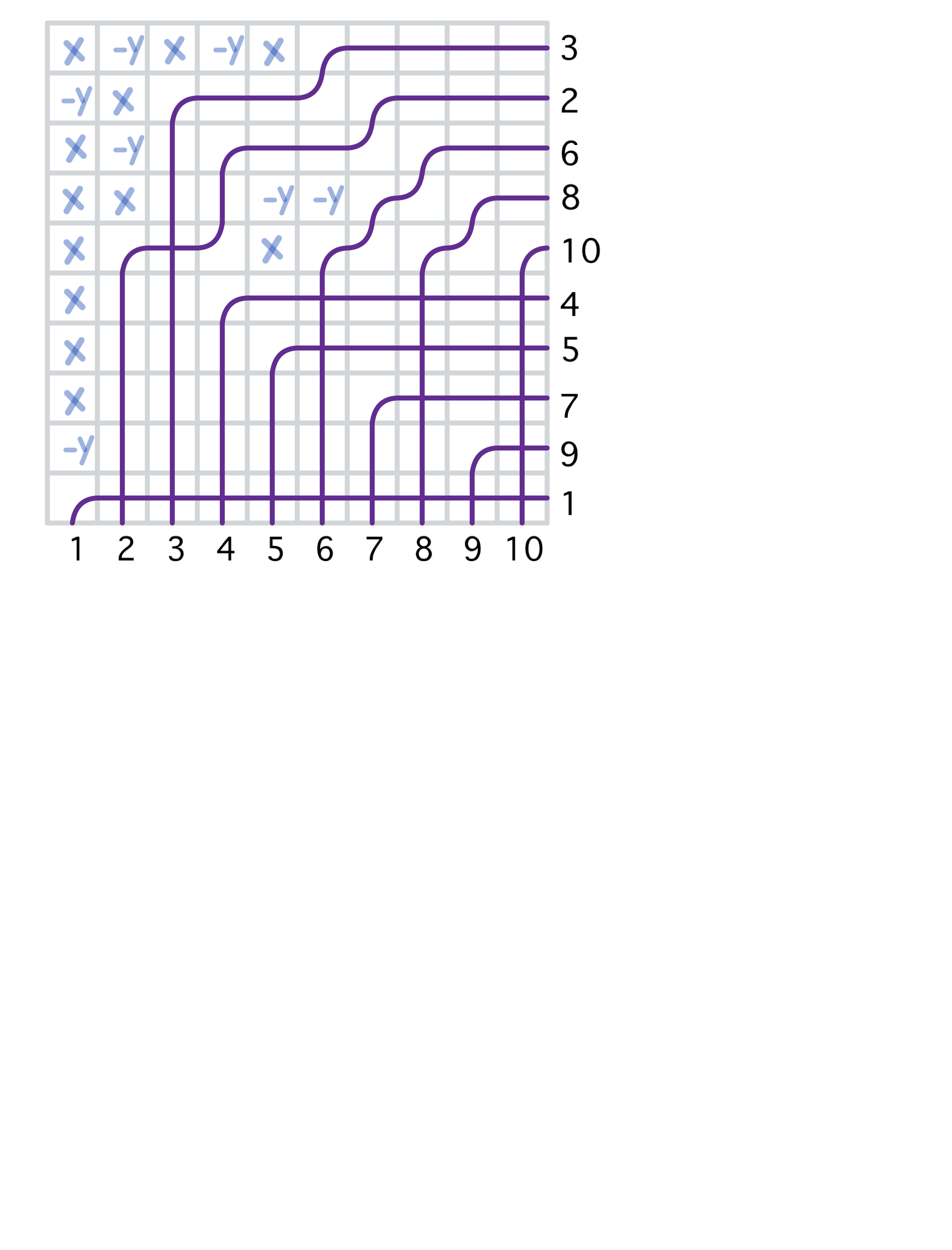}};
\end{tikzpicture}

      \caption{Insertion of a blank tile
      marked $\mathsf{-y}$ at
      $(i,j)=(4,5)$ for a decorated BPD of
      $\pi=[3,2,6,5,10,4,8,7,9,1]$}
    \label{fig:examplexy}

\end{figure}
\ssection{Transition and Cotransition Formulas}

We discuss briefly the 
implication our results have
on the transition and cotransition formulas of (double) Schubert polynomials.
Transition and cotransition formulas are specializations
of formula (\ref{doublemonk}). If there is a unique 
$l>\alpha$ such that $\pi\, t_{\alpha, l}\gtrdot \pi$,
namely if the right side of  formula (\ref{doublemonk})
only has one summand, we get the \emph{transition formula}
for $\S_{\pi\,t_{\alpha, l}}$. In terms of 
combinatorial bijections, this is the simplest case because
only a single droop/undroop move is required to go 
between the bijection, and each move only modifies four 
tiles locally.
The details of this 
is given in \cite{weigandt2020bumpless}. Therefore, to
establish the transition formula alone for double Schubert
polynomials, we do not need to consider 
decorated bumpless pipe dreams. Billey, Holroyd, 
and Young gave a bijective proof for transition with
ordinary pipe dreams \cite{billey2019bijective}. There, the construction only works
for single Schubert polynomials. 

If, on the other hand, there is no $k<\alpha$ such that
$\pi\,t_{k,\alpha}$, we get the \emph{cotransition formula}.
Unlike the transition formula, if we only work with 
bumpless pipe dreams without decorations, we can only 
get the version for single Schubert polynomials. This
is analogous to the phenomenon in \cite{billey2019bijective}.
On the other hand, in \cite{knutson2019schubert}
a simple bijective proof of cotransition for double Schubert polynomials is given with ordinary pipe dreams, which
only requires changing one tile locally to go between
the bijection. As a direct consequence,
using the cotransition bijections for ordinary and
bumpless pipe dreams, we get a bijection of ordinary
and bumpless pipe dreams by reverse induction
on the length of the permutation. This idea is similar
to the approach in \cite{fan2018bumpless} where a shape preserving bijection between reduced word tableaux for a permutation $w$ and Edelman-Greene pipe dreams of $w$ is constructed.

\bibliographystyle{alpha}
\bibliography{ref}

\begin{thebibliography}{BHY19}

\bibitem[BB93]{bergeron1993rc}
Nantel Bergeron and Sara Billey.
\newblock {RC}-graphs and {S}chubert polynomials.
\newblock {\em Experimental Mathematics}, 2(4):257--269, 1993.

\bibitem[BHY19]{billey2019bijective}
Sara~C Billey, Alexander~E Holroyd, and Benjamin~J Young.
\newblock A bijective proof of {M}acdonald’s reduced word formula.
\newblock {\em Algebraic Combinatorics}, 2(2):217--248, 2019.

\bibitem[FGS18]{fan2018bumpless}
Neil~JY Fan, Peter~L Guo, and Sophie~CC Sun.
\newblock Bumpless pipedreams, reduced word tableaux and {S}tanley symmetric
  functions.
\newblock {\em arXiv preprint arXiv:1810.11916}, 2018.

\bibitem[Knu19]{knutson2019schubert}
Allen Knutson.
\newblock Schubert polynomials, pipe dreams, equivariant classes, and a
  co-transition formula.
\newblock {\em arXiv preprint arXiv:1909.13777}, 2019.

\bibitem[LLS18]{lam2018back}
Thomas Lam, Seung~Jin Lee, and Mark Shimozono.
\newblock Back stable {S}chubert calculus.
\newblock {\em arXiv preprint arXiv:1806.11233}, 2018.

\bibitem[Mon59]{monk1959geometry}
David Monk.
\newblock The geometry of flag manifolds.
\newblock {\em Proceedings of the London Mathematical Society}, 3(2):253--286,
  1959.

\bibitem[Wei20]{weigandt2020bumpless}
Anna Weigandt.
\newblock Bumpless pipe dreams and alternating sign matrices.
\newblock {\em arXiv preprint arXiv:2003.07342}, 2020.

\end{thebibliography}
\end{document}